\numberwithin{equation}{section}
\newtheorem{Th}{Theorem}[section]
\newtheorem{Le}{Lemma}[section]
\journal{Inverse Problems and Imaging}
\begin{document}

\begin{frontmatter}

%% Title, authors and addresses

%% use the tnoteref command within \title for footnotes;
%% use the tnotetext command for theassociated footnote;
%% use the fnref command within \author or \address for footnotes;
%% use the fntext command for theassociated footnote;
%% use the corref command within \author for corresponding author footnotes;
%% use the cortext command for theassociated footnote;
%% use the ead command for the email address,
%% and the form \ead[url] for the home page:
%% \title{Title\tnoteref{label1}}
%% \tnotetext[label1]{}
%% \author{Name\corref{cor1}\fnref{label2}}
%% \ead{email address}
%% \ead[url]{home page}
%% \fntext[label2]{}
%% \cortext[cor1]{}
%% \affiliation{organization={},
%%             addressline={},
%%             city={},
%%             postcode={},
%%             state={},
%%             country={}}
%% \fntext[label3]{}

\title{On uniqueness of elastic scattering from a cavity}

%% use optional labels to link authors explicitly to addresses:
%% \author[label1,label2]{}
%% \affiliation[label1]{organization={},
%%             addressline={},
%%             city={},
%%             postcode={},
%%             state={},
%%             country={}}
%%
%% \affiliation[label2]{organization={},
%%             addressline={},
%%             city={},
%%             postcode={},
%%             state={},
%%             country={}}

\author[inst1]{Tianjiao Wang}

\affiliation[inst1]{organization={School of Mathematical Sciences},%Department and Organization
            addressline={Zhejiang University}, 
            city={Hangzhou},
            postcode={310058}, 
%            state={Zhejiang},
            country={P. R. China}}

\author[inst2]{Yiwen Lin}
\affiliation[inst2]{organization={School of Mathematical Sciences and Institute of Natural
		Sciences},%Department and Organization
	addressline={Shanghai Jiao Tong University}, 
	city={Shanghai},
	postcode={200240}, 
	%            state={Zhejiang},
	country={P. R. China}}

\author[inst1,inst3]{Xiang Xu}
%\author[inst1,inst2]{Author Three}

%\affiliation[inst2]{organization={Department Two},%Department and Organization
%            addressline={Address Two}, 
%            city={City Two},
%            postcode={22222}, 
%            state={State Two},
%            country={Country Two}}
\fntext[inst3]{Corresponding author, xxu@zju.edu.cn. This work was supported in part by National Natural Science Foundation of China (11621101, 12071430, 12201404),  Key Laboratory of Collaborative Sensing and Autonomous Unmanned Systems of Zhejiang Province and Postdoctoral Science Foundation
	of China (2021TQ0203).}
\begin{abstract}
	%% Text of abstract
	The paper considers direct and inverse elastic scattering from a cavity in homogeneous medium with Dirichlet and Neumann boundary conditions. For direct scattering, existence and uniqueness are derived by variation approach.  For inverse scattering, Fr$\acute{\rm e}$chet derivatives of the solution operators are  investigated, which give local stability for Dirichlet case.
\end{abstract}

\begin{keyword}
	%% keywords here, in the form: keyword \sep keyword
	elastic cavity \sep variation formulation \sep Navier equations \sep existence \sep uniqueness
	%% PACS codes here, in the form: \PACS code \sep code
	%\PACS 0000 \sep 1111
	%% MSC codes here, in the form: \MSC code \sep code
	%% or \MSC[2008] code \sep code (2000 is the default)
	\MSC  35P25 \sep  35A15 \sep 35R30
\end{keyword}

\end{frontmatter}

%% \linenumbers

%% main text
\section{Introduction}
\label{sec:sample1}
In this paper, we consider a time-harmonic plane elastic wave incident on a cavity which can be regarded as a local perturbation below the plane with wide-ranging applications as engine inlet ducts, cracks, gaps and so on.

For electromagnetic incident waves, there are considerable research results in literature. For instance,  Ammari, Bao and Wood proved  uniqueness and existence of electromagnetic cavity problems respectively to TE and TM polarization by variational approaches in \cite{r5}, and by integral equation methods in \cite{r16}. They also extend similar results for Maxwell's equations in \cite{r17} by variational approaches. Recently, Bao et al established a stability result explicitly dependent on wave number for TE polarization based on Fourier analysis in energy space in \cite{r21} and TM polarization in \cite{r22}. Numerical methods for large cavities can be seen in \cite{r18,r19,r20}.

Compared to electromagnetic scattering, elastic scattering also has wide applications in seismology and geophysics such as \cite{r12,r13,r14} and attracts more attention recently. However, elastic scattering problems have not been studied intensively due to their inherent difficulties arising from the governing equations and boundary conditions. For periodic surface, Arens studied quasi-periodic Green tensor and elastic potentials in \cite{r3} and used Rayleigh expansions and Rellich identities to prove uniqueness and integral equation methods to prove existence in \cite{r2}. Elschner and Hu applied variational approaches to get similar results in \cite{r6}. For general unbounded rough surfaces, Arens studied Green tensor, elastic potentials and upward radiation condition (UPRC), and proved uniqueness by integral equation methods and existence by integral equation methods, see \cite{r1,r4,r15}. Elschner and Hu \cite{r7} gave an equivalent form of UPRC and proved uniqueness and existence by variational approaches. They \cite{r11} also considered solvability in weighted Sobolev spaces and proved existence and uniqueness of elastic scattering from unbounded rough surface with an incident wave.
Moreover, Hu, Yuan and Zhao \cite{r8} combined variation formula and integral equation to prove uniqueness and existence of two-dimensions local perturbed surface scattering problem with Lipschitz graph. For cavities, Hu, Li and Zhao considered elastic scattering in three-dimensions and proved the uniqueness in \cite{r9} for Dirichlet boundary conditions.

Inverse cavity problem is to reconstruct the shape of a cavity by measured far-field data on the artificial boundary. In general, to reconstruct a shape,  Fr$\acute{\rm e}$chet derivative is commonly used. For instance,  Bao, Gao and Li considered TE and TM polarization in \cite{r23} and  proved uniqueness for lossy medium and studied the domain derivative and local stability for inverse electromagnetic cavity. A similar results for Maxwell's equations was extended in \cite{li2012inverse}. Bao and Lai \cite{bao2014radar} proposed an optimization scheme to design a cavity for radar cross section reduction. Recently, Hu, Yuan, Zhao considered domain derivative for the inverse elastic scattering by a locally perturbed rough surface in \cite{r8}. To the authors' best knowledge, research on inverse elastic scattering from cavities is still at its infant stage.  

This paper intends to extend electromagnetic scattering from a cavity to elastic scattering. By employing transparent boundary conditions given by Dirichlet to Neumann (DtN) and Neumann to Dirichlet (NtD) operators, the boundary value problems can be reduced into a bounded domain so that the Fredholm alternative can be applied. Then, the variational approach is used to prove existence and uniqueness of the elastic scattering problem in two-dimensions with Dirichlet condition or Neumann condition. For Dirichlet problem, the existence in two-dimensions is actually trivial compared to three dimensions in \cite{r9}. However, we can obtain the uniqueness for two-dimensions while it is still open for three dimensions. Neumann problem is similar to Dirichlet problem with additional difficulties for the NtD operator. On inverse elastic cavity problem, this paper applies the method of changing variable to give the Fr$\acute{\rm e}$chet derivative for Dirichlet problem and Neumann problem, respectively. The difficulty for inverse Neumann problem lies in  the boundary value problem reduced by it in Section 2 has inhomogeneous Neumann boundary condition by directly applying NtD operator. Hence, in order to obtain the TBC deduced by the {DtN} operator, we use an artificial boundary to rewrite the boundary value problem for Neumann case. 

The paper is outlined as follows. In Section 2, the direct problems are reduced into variation problems in a bounded domain by DtN operator and NtD operator. In Section 3 and Section 4, existence and uniqueness for Dirichlet problem and Neumann problem are examined respectively. Moreover, the solutions to variation problems can be extended to satisfy Kurpradze-Sommerfeld radiation condition and Navier equations in upper half-space. In section 5, the Fr$\acute{\rm e}$chet derivatives for Dirichlet problem and Neumann problem are given respectively and a local stability result is given for Dirichlet problem. Conclusion is given in Section 6.

\section{Problem formulation}
\label{sec:sample2}
Consider a time-harmonic plane wave incident on a cavity, which is shown in Figure 1, denoted by $D$ with boundary $\Gamma\cup S$, where $\Gamma \subset \{x_2=0\}$ is bounded and $S$ is assumed Lipschitz continuous. It is noted that $S$ is not necessary a graph of some function. In addition, denote $\Gamma^c=\{x_2=0\}\backslash\Gamma$. The medium is assumed to be homogeneous.
\begin{figure}
    \centering
    \includegraphics{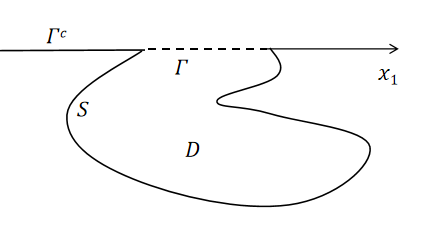}
    \caption{The problem geometry}
    \label{fig:my_label}
\end{figure}
Suppose the incident time-harmonic plane wave takes the form
\[
u_i=c_p\Vec{d}\,\text{exp}(i k_p(x_1\sin{\theta}-x_2\cos{\theta}))+c_s\Vec{d}^{\bot}\text{exp}(i k_s(x_1\sin{\theta}-x_2\cos{\theta})),
\]where
\[ c_p,c_s\in\mathbb{C},\quad
\vec{d}=(\sin{\theta},-\cos{\theta}),\quad \Vec{d}^{\bot}=(\cos{\theta},\sin{\theta}),\quad
\theta\in(-\pi/2,\pi/2).
\]
Define the compressional and shear wave numbers by \[
 k_p:=\omega/\sqrt{2\mu+\lambda},\quad k_s:=\omega/\sqrt{\mu}.\]
For simplicity, throughout this paper, it assumes  $c_p=k_p$, $c_s=0$. Consider total wave field $u$ which satisfies the following Navier equations with Lam$\acute{\rm e}$ constants $\lambda>0$, $\mu>0$ and frequency $\omega>0$
\begin{gather} \label{eq2.1}
    \mu\Delta u+(\mu+\lambda)\nabla(\nabla\cdot u)+\omega^2u=0 \quad {\rm in}\quad D\cup\{x_2>0\}.
\end{gather}
For convenience, denote by $\Delta^*u= \mu\Delta u+(\mu+\lambda)\nabla(\nabla\cdot u)$.
This paper considers two kinds of boundary conditions, i.e.,  Dirichlet condition
\begin{equation} \label{eq2.2}
 u=0 \quad {\rm on}\quad S\cup\Gamma^c,
\end{equation}
and Neumann condition\begin{equation} \label{eq2.3}
Tu=0 \quad\text{on}\quad S\cup\Gamma^c,
\end{equation}
where the differential operator is defined $Tu:=\mu\partial_nu+(\lambda+\mu)\Vec{n} {\rm div}\,u$.
Consider the Helmholtz decomposition for $u$, i.e.,
\begin{equation} \label{eq2.4}
    u=-i({\rm grad}\,\phi+{\overrightarrow{\rm  curl}}\,\psi),
    \end{equation}
    with
    \begin{equation} \label{eq2.5}
    \quad \phi =-\frac{i}{k^2_p}{\rm div}\,u, \text{ and }\psi=\frac{i}{k^2_s}{\rm curl}\,u,
\end{equation}where $\overrightarrow{\text{curl}}\, u=(\partial_2 u_1, -\partial_1 u_2)^\top$ and $\text{curl}\,u=\partial_1 u_2-\partial_2 u_1$. 
Then, $\phi$ and $\psi$ satisfy the homogeneous Helmholtz equations
 \begin{equation} \label{eq2.6}
     (\Delta+k^2_p)\phi=0\quad {\rm and}\quad (\Delta+k^2_s)\psi=0,\quad x_2>0.
 \end{equation}
The total wave field consists of incident wave $u_i$, reflected wave $u_r$ and scattering wave $u_s$, i.e., $u=u_i+u_r+u_s$. Here, the reflected wave $u_r$ satisfies
\begin{align}
 \Delta^*u_r +\omega^2 u_r=0 \quad {\rm in}\quad \{x_2>0\},\notag\\
   u_r+u_i=0\quad \text{on}\quad\{x_2=0\}, \label{eq2.7} \\
   \text{or}\quad T(u_i+u_r)=0 \quad\text{on}\quad\{x_2=0\} \label{eq2.8}.
\end{align}
It is easy to know that the reflected wave exists and can be expressed by the following functions \begin{align*}    
u_1&=\left(
    \alpha,\beta  
\right)^\top \text{exp}(i(\alpha x_1-\beta x_2)),\\
u_2&=\left(
    \alpha, -\beta  
\right)^\top \text{exp}(i(\alpha x_1+\beta x_2)),\\
u_3&=\left(
    \eta, -\alpha  
\right)^\top \text{exp}(i(\alpha x_1+\eta x_2)),
\end{align*}
with $\alpha=k_p\sin{\theta},\,\beta=k_p\cos{\theta}$ and $\eta=\sqrt{k_s^2-\alpha^2}$ which are linearly independent solutions to Navier equations. Combing the Dirichlet boundary condition gives \[ 
u_{r,1}=u_1-\frac{\alpha^2-\beta\eta}{\alpha^2+\beta\eta}u_2-\frac{2\alpha\beta}{\alpha^2+\beta\eta}u_3.
\]
And the Neumann boundary condition \eqref{eq2.8}  yields \begin{align*}
u_{r,2} =&\frac{(\lambda+\mu)k_p^2+\mu\beta^2\eta+\mu\alpha^2\beta}{2\mu\alpha^2\beta}u_1 \\ &+\frac{-(\lambda+\mu)k_p^2-\mu\beta^2\eta+\mu\alpha^2\beta}{2\mu\alpha^2\beta}u_2 +\frac{(\mu+\lambda)k^2_p+\mu\beta^2}{\mu\alpha\eta}u_3.
\end{align*}
For the scattering wave $u_s$, it is required to satisfy the following half-plane Kupradze-Sommerfeld radiation condition,
\begin{equation} \label{eq2.9}
    \lim_{r \to 0}r^{1/2}(\partial_r \phi(x)-i k_p\phi(x))=0 \quad {\rm in}\quad \mathbb{R}^+ ,
    \end{equation} and\begin{equation} \label{eq2.10}
         \lim_{r \to 0}r^{1/2}(\partial_r \psi(x)-i k_s\psi(x))=0 \quad {\rm in}\quad \mathbb{R}^+,
    \end{equation}
    with $r=|x|$ for $\phi=-\frac{i}{k^2_p} {\rm div} u_s$, $\psi=\frac{i}{k^2_s} {\rm curl} u_s$.

Now we can state the following two problems respectively.
  
\emph{ Dirichlet problem}: Find $u$ which satisfies \eqref{eq2.1}-\eqref{eq2.2}, where $u_s$ satisfies Kupradze-Sommerfeld radiation condition \eqref{eq2.9}-\eqref{eq2.10}.

\emph{ Neumann problem}: Find $u$ which satisfies \eqref{eq2.1} and \eqref{eq2.3}, where $u_s$ satisfies Kupradze-Sommerfeld radiation condition \eqref{eq2.9}-\eqref{eq2.10}.

To investigate the above two problems in an unbounded domain, it is necessary to introduce transparent boundary conditions (TBC) to reduce the model problems into a bounded domain.  

By Helmholtz decomposition, we have $u_s$ satisfying \eqref{eq2.4} with corresponding $\phi$, $\psi$ satisfying \eqref{eq2.5}-\eqref{eq2.6}.
 Applying the Fourier transform to \eqref{eq2.6} with respect to $x_1$ and using radiation condition gives 
 \begin{equation*}
     \hat{\phi}=P(\xi)\,\text{exp}(i x_2\gamma_p(\xi)),\quad
     \hat{\psi}=S(\xi)\,\text{exp}(i x_2\gamma_s(\xi)),
 \end{equation*}
 with \[\gamma_p(\xi)=\sqrt{k^2_p-\xi^2},\quad \gamma_s(\xi)=\sqrt{k^2_s-\xi^2}.\]
  Denote by $\hat{u}(\xi)=\mathcal{F}u(\xi)$ the Fourier transformation of $u$ with respect to $x_1$.
 Then the Fourier transformation of $u$ is given by \begin{equation} \label{eq2.11}
  \left(
\begin{array}{cc}
    \hat{u}_{s,1}   \\
    \hat{u}_{s.2}  
\end{array}\right)=\left(\begin{array}{cc}
    \xi & -i\partial_2 \\
    -i\partial_2 & -\xi
\end{array}\right)\left(\begin{array}{cc}
   P(\xi)\, \text{exp}(i x_2\gamma_p)  \\
   S(\xi)\, \text{exp}(i x_2\gamma_s)  
\end{array}\right).
\end{equation}
 Let $x_2=0$,
\begin{equation} \label{eq2.12}
    \hat{u}_s(\xi,0)=\left(\begin{array}{cc}
    \hat{u}_{s,1}(\xi,0)\\
    \hat{u}_{s,2}(\xi,0)\\
    \end{array}\right)=\left(
    \begin{array}{cc}
         \xi &\gamma_s  \\
         \gamma_p& -\xi
    \end{array}\right)\left(\begin{array}{cc}
         P(\xi)\\
         S(\xi)
    \end{array}\right),
\end{equation}
which implies 
\begin{equation} \label{eq2.13}
    \left(\begin{array}{cc}
         P(\xi)\\
         S(\xi)
    \end{array}\right)=\frac{1}{\xi^2+\gamma_p\gamma_s}\left(
    \begin{array}{cc}
         \xi &\gamma_s  \\
         \gamma_p& -\xi
    \end{array}\right)\left(\begin{array}{cc}
    \hat{u}_{s,1}(\xi,0)\\
    \hat{u}_{s,2}(\xi,0)\\
    \end{array}\right).
\end{equation}
Inserting \eqref{eq2.13} into \eqref{eq2.11} arrives at the following representation for $u_s$:
\begin{equation} \label{eq2.14}
    u_s=\frac{1}{\sqrt{2\pi}}\int_\mathbb{R}( \text{exp}(i x_2\gamma_p(\xi))M_p(\xi)+\text{exp}(i x_2\gamma_s(\xi))M_s(\xi)) \hat{u}_s(\xi,0)\text{exp}(ix_1\xi)\,\mathrm{d}\xi
\end{equation}
in $\{x_2>0\}$ with\[
    M_p(\xi)=\frac{1}{\xi^2+\gamma_p\gamma_s}\left(
    \begin{array}{cc}
         \xi^2 &\gamma_s\xi  \\
         \gamma_p\xi& \gamma_p\gamma_s
    \end{array}\right)
     ,\quad M_s(\xi)=\frac{1}{\xi^2+\gamma_p\gamma_s}\left(
    \begin{array}{cc}
         \gamma_p\gamma_s &-\gamma_s\xi  \\
         -\gamma_p\xi& \xi^2
    \end{array}\right).
\]
Recalling the definition of differential operator $T$, we have \begin{equation} \label{eq2.15}
    Tu_s:=\mu\partial_nu_s+(\lambda+\mu)\Vec{n} {\rm div}\,u_s \quad {\rm on}\quad\Gamma.
\end{equation}
Combing (2.14)-(2.15) and direct calculation implies that
\begin{equation} \label{eq2.16}
    Tu_s=\frac{1}{\sqrt{2\pi}}\int_\mathbb{R}M(\xi)\hat{u}_s(\xi,0)\text{exp}(ix\xi)\,\mathrm{d}\xi,
\end{equation}
where \begin{equation} \label{eq2.17}
  M(\xi)=  \frac{i}{\xi^2+\gamma_p\gamma_s}\left(
    \begin{array}{cc}
         \omega^2\gamma_p &-\xi\omega^2+\xi\mu(\xi^2+\gamma_p\gamma_s)  \\
         \xi\omega^2-\xi\mu(\xi^2+\gamma_p\gamma_s)& \omega^2\gamma_s
    \end{array}\right).
\end{equation}
Then the Dirichlet to Neumann operator can be defined by \begin{equation*}
\mathcal{T}f:=\mathcal{F}^{-1}(M\hat{f}),\quad f\in H^{1/2}(\mathbb{R})^2.
\end{equation*}
So $Tu_s=\mathcal{T}u_s$ on $\Gamma$. Noting that $u_i+u_r=0$ on $\{x_2=0\}$ and $u=0$ on $\Gamma^c$, then TBC for Dirichlet problem can be given by
\begin{equation*}
    Tu=\mathcal{T}u+g \quad {\rm on}\quad\Gamma ,\quad \text{with}\quad g=T(u_i+u_r).
    \end{equation*}
By TBC, Dirichlet problem can be reformulated as
\begin{align*}
\Delta^*u+\omega^2u=0\quad {\rm in}\quad D, \\
u=0\quad {\rm on}\quad S, \\
Tu=\mathcal{T}u+g \quad {\rm on}\quad\Gamma.
\end{align*}
Denote $V=H^1_S(D)^2=\{u\in H^1(D)^2:u=0\, \mathrm{ on }\, S\}$.
Then for $u,\,v \in V$, the Betti formula gives
\begin{equation*}
    0=-\int_D (\Delta^*+\omega^2)u\cdot\bar{v}\,\mathrm{d}x= \int_D \mathcal{E}(u,\bar{v})-\omega^2 u\cdot\bar{v}\,\mathrm{d}x-\int_{\Gamma}(\mathcal{T}u+g)\cdot\bar{v}\,\mathrm{d}s,
\end{equation*}
where\begin{equation} \label{eq2.18}
    \mathcal{E}(u,v)=\mu(\nabla u_1\cdot\nabla v_1+\nabla u_2\cdot \nabla v_2)+(\lambda+\mu)(\nabla\cdot u)(\nabla\cdot v).
\end{equation}
Define a sesquilinear form $B\, :V\times V\to \mathbb{C}$ as
\begin{equation*}
    B(u,v)=
\int_D \mathcal{E}(u,\bar{v})-\omega^2 u\cdot\bar{v}\,\mathrm{d}x-\int_{\Gamma}\mathcal{T}u\cdot\bar{v}\,\mathrm{d}s.
\end{equation*}
Now we obtain variation formula of Dirichlet boundary value problem.

 \emph{Variation problem 1}: find $u\in V$ such that $B(u,v)=(g,v)_\Gamma ,\,\forall  v\in V$.
 
It is similar to give TBC and variation formula corresponding to Neumann problem. Note that in this case, it is not $u_s$ but $Tu_s$, which has compact support on $\Gamma$. So $\hat{u}_s$ may not be integrable on $\{x_2=0\}$. Thus, \eqref{eq2.13} and \eqref{eq2.16} may not hold in this case. However, $u_s$ can be represented by $\widehat{Tu_s}(\xi,0)$ similarly. According to \eqref{eq2.14}, we obtain \begin{equation} \label{eq2.19}
\widehat{T u_s}(\xi,x_2)=\left( \begin{array}{cc}
    \mu\partial_2 & 0 \\
    i(\lambda+\mu)\xi & (\lambda+2\mu)\partial_2
\end{array}\right)\left(\begin{array}{cc}
    \hat{u}_{s,1}  \\
    \hat{u}_{s,2} 
\end{array}\right).
\end{equation}
 Combing \eqref{eq2.11} and \eqref{eq2.19} yields
 \[ \widehat{T u_s}(\xi,x_2)=\left( \begin{array}{cc}
    \mu\partial_2 & 0 \\
    i(\lambda+\mu)\xi & (\lambda+2\mu)\partial_y
\end{array}\right) \left(\begin{array}{cc}
    \xi & -i\partial_2 \\
    -i\partial_2 & -\xi
\end{array}\right)\left(\begin{array}{cc}
   P(\xi)\, \text{exp}(i x_2\gamma_p)  \\
   S(\xi)\, \text{exp}(i x_2\gamma_s)  
\end{array}\right).
 \]
 Take $x_2=0$, then $P,\,S$ can be represented by $\widehat{Tu_s}(\xi,0)$. Insert the representation in \eqref{eq2.11}, we have\begin{equation*}
\hat{u}_s(\xi,x_2)=\Big(\text{exp}\big(i x_2\gamma_p(\xi)\big)M_p(\xi)+\text{exp}\big(i x_2\gamma_s(\xi)\big)M_s(\xi)\Big)M^{-1} \widehat{T u_s}(\xi,0).
 \end{equation*}
 Taking inverse Fourier transformation and $x_2=0$ gives \[
u_s(\xi,0)=\mathcal{F}^{-1}(M^{-1}\widehat{u_s}(\xi,0)).
 \]
 Hence NtD operator $\varLambda$ can be defined by
\begin{equation*}\varLambda(f):=\mathcal{F}^{-1}(M^{-1}\hat{f}),\quad f\in H^{-1/2}(\mathbb{R})^2,\end{equation*}
 which implies
 \[\varLambda(Tu_s)=u_s\quad \text{on}\quad \Gamma.\]
Denote $h=u_r+u_i$ and $g=-(\Delta^*+\omega^2)h$. Then the Neumann problem can be reformulated as 
\begin{align}
    \Delta^*u_s+\omega^2u_s=0\quad {\rm in}\quad D, \notag \\
Tu_s=-Th\quad {\rm on}\quad S,  \\
u_s=\varLambda(Tu_s) \quad {\rm on}\quad \Gamma. \notag 
\end{align}
Define the function space
\[W=\{u\in H^1(D)^2:Tu\in H^{-1/2}_0(\Gamma)^2,u=\varLambda(Tu)\, \text{on}\,\Gamma\},\]
where\[ H^{-1/2}_0(\Gamma)^2=\{u\in H^{-1/2}(\Gamma)^2: \varLambda(\tilde{u})\in H^{1/2}(K)^2 ,\,{\forall}  K\subset\subset \mathbb{R}^2\},\] 
and $\tilde{u}$ is zero extension of $u$.
In fact, it is clear that $H^{-1/2}_0(\Gamma)^2$ is the dual space of $H^{1/2}(\Gamma)^2$ in \cite{r15}.
The sesquilinear form $\mathcal{B}$: $W\times W \rightarrow \mathbb{C}$ is defined by \begin{equation*}\mathcal{B}(v,w)=
\int_D \mathcal{E}(v,\bar{w})-\omega^2 v\cdot\bar{w}\,\mathrm{d}x-\int_{\Gamma}Tu\cdot \overline{\varLambda(Tw)} \,\mathrm{d}s.
\end{equation*}
Now we can give the following variation formula by the Betti formula.

\emph{Variation problem 2}: find $u_s\in W$ such that \[\mathcal{B}(u_s,w)=\int_S -Th \cdot \bar{w}\,\text{d}s-\int_D g\cdot \bar{w} \,\text{d}x,\quad\forall w\in W.\]

Next two sections consider existence and uniqueness of solutions to the above two variation problems. Moreover,  they can be extended to $\{x_2>0\}$ satisfying Kurpradze-Sommerfeld radiation condition.
\section{Existence and uniqueness for Dirichlet problem}
\label{sec:sample3}

Since this problem is in a bounded domain with Lipschitz continuous boundary, we have the well-known compact embedding result that $H^1(D)$ is a compact subset of $ L^2(D)$. Hence uniqueness of variation problem and Fredholm alternative yield existence directly. To this end, we show that the continuity of DtN operator results in the continuity of the sesquilinear form $B$, which is stated in the following lemma with the proof omitted.
\begin{Le}
    The DtN operator $\mathcal{T}$ is a continuous linear operator from $H^{1/2}_0(\Gamma)^2$ to $H^{-1/2}(\Gamma)^2$ .
\end{Le}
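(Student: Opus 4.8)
The plan is to realise $\mathcal{T}$ as a Fourier multiplier on the whole real line and then to estimate its matrix symbol $M(\xi)$ from \eqref{eq2.17}. Given $f\in H^{1/2}_0(\Gamma)^2$, let $\tilde f\in H^{1/2}(\mathbb{R})^2$ denote its extension by zero; by the definition of $H^{1/2}_0(\Gamma)$ the map $f\mapsto\tilde f$ is bounded, and $\mathcal{T}f=\bigl(\mathcal{F}^{-1}(M\hat{\tilde f})\bigr)\big|_{\Gamma}$. Since restriction from $\mathbb{R}$ to $\Gamma$ is bounded from $H^{-1/2}(\mathbb{R})$ to $H^{-1/2}(\Gamma)$, it suffices to show that $g\mapsto\mathcal{F}^{-1}(M\hat g)$ is bounded from $H^{1/2}(\mathbb{R})^2$ to $H^{-1/2}(\mathbb{R})^2$. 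By the Fourier characterisation of the Sobolev norms this reduces to the pointwise matrix bound
\[
\|M(\xi)\|\le C\,(1+\xi^2)^{1/2},\qquad \xi\in\mathbb{R},
\]
since then $\displaystyle\int_{\mathbb{R}}(1+\xi^2)^{-1/2}|M(\xi)\hat g(\xi)|^2\,\mathrm d\xi\le C^2\int_{\mathbb{R}}(1+\xi^2)^{1/2}|\hat g(\xi)|^2\,\mathrm d\xi=C^2\|g\|_{H^{1/2}(\mathbb{R})}^2$.

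So the whole argument rests on that symbol estimate. First I would check that $M$ is continuous on all of $\mathbb{R}$: the only candidates for trouble are the points $\xi=\pm k_p,\pm k_s$, where $\gamma_p$ or $\gamma_s$ vanishes, and the real zeros of the denominator $\xi^2+\gamma_p(\xi)\gamma_s(\xi)$. At $\xi=\pm k_p,\pm k_s$ the functions $\gamma_p,\gamma_s$ are still continuous (only their derivatives are singular there), so $M$ has no singularity. That $\xi^2+\gamma_p\gamma_s$ never vanishes on the real axis follows from a short case analysis, using $k_p<k_s$: for $|\xi|<k_p$ both $\gamma_p,\gamma_s$ are positive, so $\xi^2+\gamma_p\gamma_s>0$; for $k_p<|\xi|<k_s$ one has $\gamma_p\in i\mathbb{R}$ and $\gamma_s>0$, so $\mathrm{Re}(\xi^2+\gamma_p\gamma_s)=\xi^2>0$; and for $|\xi|>k_s$ one has $\gamma_p\gamma_s=-\sqrt{(\xi^2-k_p^2)(\xi^2-k_s^2)}$ and $\xi^4>(\xi^2-k_p^2)(\xi^2-k_s^2)$, whence $\xi^2+\gamma_p\gamma_s>0$. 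Hence $M$ is continuous and bounded on every compact subset of $\mathbb{R}$.

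It remains to examine $|\xi|\to\infty$. There $\gamma_p(\xi)=i\sqrt{\xi^2-k_p^2}$, $\gamma_s(\xi)=i\sqrt{\xi^2-k_s^2}$, so $\gamma_p\gamma_s=-\sqrt{(\xi^2-k_p^2)(\xi^2-k_s^2)}=-\xi^2+\tfrac12(k_p^2+k_s^2)+O(\xi^{-2})$, hence $\xi^2+\gamma_p\gamma_s\to\tfrac12(k_p^2+k_s^2)>0$ and the denominator is bounded below for large $|\xi|$. All entries of the numerator in \eqref{eq2.17} are $O(|\xi|)$: the diagonal ones because $\omega^2\gamma_p,\omega^2\gamma_s=O(|\xi|)$, the off-diagonal ones because $\xi\omega^2$ and $\xi\mu(\xi^2+\gamma_p\gamma_s)$ are $O(|\xi|)$. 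Dividing gives $\|M(\xi)\|\le C|\xi|$ for large $|\xi|$, which together with boundedness on compacts yields $\|M(\xi)\|\le C(1+\xi^2)^{1/2}$ throughout $\mathbb{R}$ and completes the proof. The one genuinely delicate point is the non-vanishing of $\xi^2+\gamma_p\gamma_s$ on the real line together with its uniform lower bound --- this is exactly what keeps $M$ from developing a surface-wave type singularity; everything else is routine bookkeeping with the Fourier description of $H^{\pm1/2}$.
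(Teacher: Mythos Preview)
Your argument is correct and is precisely the approach the paper has in mind: the proof of Lemma~3.1 is omitted in the paper, but the analogous Lemma~4.1 for the NtD operator $\varLambda$ is proved there by exactly the same scheme---reducing to a pointwise bound on the matrix symbol via the Fourier characterisation of $H^{\pm 1/2}$, checking boundedness on bounded $\xi$, and then computing the asymptotics of the entries and of $\xi^{2}+\gamma_{p}\gamma_{s}$ as $|\xi|\to\infty$. Your case analysis ruling out real zeros of $\xi^{2}+\gamma_{p}\gamma_{s}$ is the one step the paper leaves implicit, and it is carried out correctly.
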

 In order to use Fredholm alternative, it needs to show the sesquilinear form satisfies the  G$\mathring{\rm a}$rding inequality.
Take real part of $B$, \[\Re\,B(u,u)=\int_D \mathcal{E}(u,\bar{u})-\omega^2 u\cdot\bar{u}\,\mathrm{d}x-\Re\int_{\Gamma}\mathcal{T}u\cdot\bar{u}\,\mathrm{d}s.\]
Considering \eqref{eq2.18} gives \[\int_D\mathcal{E}(u,\bar{u})\,\mathrm{d}x=\mu\|\nabla u\|^2_{L^2(D)^2}+(\lambda+\mu)\|\nabla \cdot u\|^2_{L^2(D)^2}\ge \mu\|\nabla u\|^2_{L^2(D)^2}.\]
It turns out that \begin{equation}\Re\,B(u,u)\ge \mu\|\nabla u\|^2_{L^2(D)^2}-\omega^2\|u\|^2_{L^2(D)^2}-\Re\,\left\langle\mathcal{T}u,u\right\rangle_{\Gamma}.\end{equation}
By Plancherel identity,\[-\Re\,\left\langle\mathcal{T}u,u\right\rangle_{\Gamma}=-\left\langle \Re M\,\mathcal{F}u, \mathcal{F}u \right\rangle_\Gamma,\]
where $\Re\,M=(M+\bar{M}^{\top})/2$. However, the sign of $-\Re\,\left\langle\mathcal{T}u,u\right\rangle_{\Gamma}$ is undeterminied because it lacks the result that $-\Re \,M(\xi)$ is positive definite or negative definite. Fortunately, for sufficiently large $|\xi|$, we know $-\Re\,M(\xi)$ is positive definite, given in Lemma 3.2.
\begin{Le}
    $-\Re \,M(\xi)>0$ for $|\xi|>k_s$.
\end{Le}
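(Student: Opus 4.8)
The plan is to write down $-\Re M(\xi) = -\tfrac12(M + \overline{M}^\top)$ explicitly from the formula \eqref{eq2.17} and check positive definiteness of the resulting real symmetric $2\times 2$ matrix by the usual criterion: positive trace and positive determinant. First I would observe that for $|\xi| > k_s > k_p$ both $\gamma_p(\xi) = \sqrt{k_p^2 - \xi^2}$ and $\gamma_s(\xi) = \sqrt{k_s^2-\xi^2}$ are purely imaginary; writing $\gamma_p = i|\gamma_p|$, $\gamma_s = i|\gamma_s|$ with $|\gamma_p| = \sqrt{\xi^2 - k_p^2}$, $|\gamma_s| = \sqrt{\xi^2 - k_s^2}$, the scalar prefactor $\frac{i}{\xi^2 + \gamma_p\gamma_s}$ becomes $\frac{i}{\xi^2 - |\gamma_p||\gamma_s|}$, which is $i$ times a positive real number (since $\xi^2 > \xi^2 - k_s^2 \ge |\gamma_p||\gamma_s|$ forces $\xi^2 - |\gamma_p||\gamma_s| > 0$). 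Thus the $i$ in front cancels against the imaginary diagonal entries $\omega^2\gamma_p$, $\omega^2\gamma_s$, making the diagonal of $M$ real and positive, while the off-diagonal entries $\mp(\xi\omega^2 - \xi\mu(\xi^2+\gamma_p\gamma_s))$ are also real, so that $M$ itself is already (real and) skew-off-diagonal symmetric; consequently $-\Re M$ has diagonal $\frac{\omega^2|\gamma_p|}{\xi^2 - |\gamma_p||\gamma_s|}$, $\frac{\omega^2|\gamma_s|}{\xi^2 - |\gamma_p||\gamma_s|}$ (both strictly positive) and vanishing off-diagonal part.

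Once this algebraic simplification is in place, positive definiteness is immediate: the matrix $-\Re M(\xi)$ is diagonal with two strictly positive entries for every $|\xi| > k_s$. The key steps in order are therefore: (i) establish the sign of $\xi^2 + \gamma_p\gamma_s = \xi^2 - |\gamma_p||\gamma_s|$ for $|\xi|>k_s$; (ii) compute $\Re M$ entrywise using $\overline{\gamma_p} = -\gamma_p$, $\overline{\gamma_s} = -\gamma_s$ on this range, noting that the antisymmetry of the off-diagonal terms makes them drop out of the Hermitian part; (iii) read off the two positive diagonal entries and conclude. A small amount of care is needed to record that on $k_p < |\xi| < k_s$ the matrix $-\Re M$ need \emph{not} be positive definite (one of $\gamma_p,\gamma_s$ is real there), which is exactly why the threshold $k_s$, rather than $k_p$, appears in the statement; this is consistent with the remark preceding the lemma that the sign is genuinely undetermined for moderate $|\xi|$.

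The only mild obstacle I anticipate is step (ii): one must be attentive to the branch conventions for the square roots $\gamma_p, \gamma_s$ so that the radiation condition is respected (i.e. $\Im\gamma_p, \Im\gamma_s \ge 0$, giving decaying rather than growing exponentials in \eqref{eq2.11}), since this fixes $\gamma_p = i\sqrt{\xi^2-k_p^2}$ with the $+$ sign and likewise for $\gamma_s$, and hence fixes all the signs above. Beyond that bookkeeping, the lemma reduces to the elementary observation that a real diagonal matrix with positive entries is positive definite, so no deeper estimate is required; the result can even be quantified, $-\Re M(\xi) \ge \frac{\omega^2 \sqrt{\xi^2 - k_s^2}}{\xi^2 - \sqrt{(\xi^2-k_p^2)(\xi^2-k_s^2)}}\, I$, which will be convenient when this lemma is fed back into the G\r{a}rding inequality for $B$.
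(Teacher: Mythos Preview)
Your computation contains a sign/reality error that breaks the argument. For $|\xi|>k_s$ you correctly set $\gamma_p=i|\gamma_p|$, $\gamma_s=i|\gamma_s|$ and $\rho:=\xi^2+\gamma_p\gamma_s=\xi^2-|\gamma_p||\gamma_s|>0$. But then the \emph{full} entries of $M$ carry the prefactor $i/\rho$: the diagonal entries become
\[
M_{11}=\frac{i}{\rho}\,\omega^2\gamma_p=-\frac{\omega^2|\gamma_p|}{\rho}<0,\qquad
M_{22}=-\frac{\omega^2|\gamma_s|}{\rho}<0,
\]
(real and \emph{negative}, not positive), while the off-diagonal entries become
\[
M_{12}=-\frac{i}{\rho}\,a,\qquad M_{21}=\frac{i}{\rho}\,a,\qquad a:=\xi\bigl(\omega^2-\mu\rho\bigr)\in\mathbb{R},
\]
which are purely \emph{imaginary}, not real. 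Since $M_{21}=\overline{M_{12}}$, the matrix $M$ is already Hermitian on this range, so $\Re M:=(M+\bar M^{\top})/2=M$: the off-diagonal part does \emph{not} drop out. Your step (ii), ``antisymmetry makes the off-diagonal vanish in the Hermitian part'', would be valid for the real symmetric part $(M+M^\top)/2$, but that is not the object the paper uses, and it is not the one relevant to $\Re\langle M\hat u,\hat u\rangle$.

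Consequently $-\Re M$ is not diagonal; one has
\[
-\Re M(\xi)=\frac{1}{\rho}\begin{pmatrix}\omega^2|\gamma_p|& i a\\ -i a & \omega^2|\gamma_s|\end{pmatrix},
\]
and positive definiteness requires, in addition to the (correct) positivity of the diagonal, the determinant inequality
\[
\omega^4|\gamma_p||\gamma_s|\;>\;\xi^2\bigl(\omega^2-\mu\rho\bigr)^2\qquad(|\xi|>k_s).
\]
This is precisely the nontrivial content of the lemma; it is the same quantity called $-d(\xi)$ in the paper's Lemma~4.2, and the paper itself defers the verification to Elschner--Hu \cite{r7}. A workable route is to use $\omega^2=\mu k_s^2$ to rewrite $\omega^2-\mu\rho=\mu(k_s^2-\xi^2+|\gamma_p||\gamma_s|)=\mu|\gamma_s|\,(|\gamma_p|-|\gamma_s|)$, which reduces the inequality to an explicit algebraic statement in $|\gamma_p|,|\gamma_s|,\xi$; but this computation is missing from your outline, and without it the proof is incomplete. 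Your final quantitative lower bound, being based on a diagonal matrix, is likewise incorrect.
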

The proof is also omitted here as it is trivial compared as the results in \cite{r7}.
Noting that \[\left\langle\mathcal{F}u, \mathcal{F}u\right\rangle=\int_{|\xi|>k_s}M(\xi)|\hat{u}|^2\mathrm{d}\,\xi+\int_{|\xi| \le k_s}M(\xi)|\hat{u}|^2\mathrm{d}\,\xi,\] and using Lemma 3.2, we have\[-\left\langle\Re\,\mathcal{F}u, \mathcal{F}u\right\rangle \ge-\int_{|\xi| \le k_s}\Re\,M(\xi)|\hat{u}|^2\mathrm{d}\,\xi.\]Therefore, it suffices to estimate the intergal in $\{|\xi| \le k_s\}$. The following inequality is similar as the three-dimensions case in \cite{r8}.
\begin{Le}
    For any $u\in V$, we have $\Re\,B(u,u)>C_1\|\nabla u\|^2_{L^2(D)^2}-C_2\|u\|^2_{L^2(D)^2}$, where $C_1,\,C_2$ are positive constants which depend on $\lambda,\,\mu,\,\omega$.
    
\end{Le}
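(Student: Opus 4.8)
The plan is to combine the two earlier observations—the positivity of $-\Re M(\xi)$ for $|\xi|>k_s$ (Lemma 3.2) and the coercivity lower bound $\int_D\mathcal{E}(u,\bar u)\,\mathrm{d}x\ge\mu\|\nabla u\|_{L^2(D)^2}^2$—with a crude but uniform estimate of the remaining low-frequency boundary contribution $\int_{|\xi|\le k_s}\Re M(\xi)|\hat u(\xi,0)|^2\,\mathrm{d}\xi$. Starting from the inequality already displayed in the excerpt,
\begin{equation*}
\Re\,B(u,u)\ge\mu\|\nabla u\|_{L^2(D)^2}^2-\omega^2\|u\|_{L^2(D)^2}^2-\int_{|\xi|\le k_s}\Re M(\xi)|\hat u(\xi,0)|^2\,\mathrm{d}\xi,
\end{equation*}
the whole task reduces to absorbing the last integral into the first two terms.

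First I would examine the entries of $M(\xi)$ from \eqref{eq2.17} on the compact set $\{|\xi|\le k_s\}$. There $\gamma_p(\xi)=\sqrt{k_p^2-\xi^2}$ is real for $|\xi|\le k_p$ and purely imaginary for $k_p<|\xi|\le k_s$, while $\gamma_s(\xi)$ is real throughout; in either case $|\gamma_p|,|\gamma_s|\le k_s$, and one checks that $\xi^2+\gamma_p\gamma_s$ stays bounded away from zero on $\{|\xi|\le k_s\}$ (this is exactly the Rayleigh-type nonvanishing that underlies the definition of $\mathcal{T}$), so every entry of $\Re M(\xi)$ is bounded by a constant $C_0=C_0(\lambda,\mu,\omega)$ uniformly in $|\xi|\le k_s$. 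Hence
\begin{equation*}
\int_{|\xi|\le k_s}\Re M(\xi)|\hat u(\xi,0)|^2\,\mathrm{d}\xi\le C_0\int_{|\xi|\le k_s}|\hat u(\xi,0)|^2\,\mathrm{d}\xi\le C_0\|\hat u(\cdot,0)\|_{L^2(\mathbb{R})^2}^2=C_0\|u(\cdot,0)\|_{L^2(\Gamma)^2}^2,
\end{equation*}
using Plancherel and that $u$ vanishes on $\Gamma^c$.

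Next I would control $\|u\|_{L^2(\Gamma)^2}$ by the trace theorem and interpolation: for any $\varepsilon>0$ there is $C_\varepsilon>0$ with $\|u\|_{L^2(\Gamma)^2}^2\le C\|u\|_{H^{1/2}(\Gamma)^2}^2\le C\|u\|_{H^1(D)^2}\|u\|_{L^2(D)^2}\le\varepsilon\|\nabla u\|_{L^2(D)^2}^2+C_\varepsilon\|u\|_{L^2(D)^2}^2$, where the last step uses Young's inequality together with $\|u\|_{H^1(D)^2}^2=\|\nabla u\|_{L^2(D)^2}^2+\|u\|_{L^2(D)^2}^2$. Choosing $\varepsilon$ so that $C_0 C\varepsilon<\mu$, say $\varepsilon=\mu/(2C_0C)$, and setting $C_1=\mu-C_0C\varepsilon=\mu/2>0$ and $C_2=\omega^2+C_0C C_\varepsilon>0$ yields $\Re\,B(u,u)\ge C_1\|\nabla u\|_{L^2(D)^2}^2-C_2\|u\|_{L^2(D)^2}^2$, with $C_1,C_2$ depending only on $\lambda,\mu,\omega$, as claimed. (The strict inequality follows because the discarded term $(\lambda+\mu)\|\nabla\cdot u\|_{L^2(D)^2}^2$ is nonnegative and the trace estimate has slack.)

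The main obstacle is the uniform lower bound $|\xi^2+\gamma_p(\xi)\gamma_s(\xi)|\ge c>0$ on $\{|\xi|\le k_s\}$, which is what guarantees $\Re M$ is genuinely bounded there rather than singular. For $|\xi|\le k_p$ both $\gamma_p,\gamma_s\ge0$ so $\xi^2+\gamma_p\gamma_s\ge\xi^2\ge0$ with a possible zero only at $\xi=0$, where however $\gamma_p\gamma_s=k_pk_s>0$, so the sum is bounded below; for $k_p<|\xi|\le k_s$ one has $\gamma_p=i|\gamma_p|$, so $\xi^2+\gamma_p\gamma_s=\xi^2+i|\gamma_p|\gamma_s$ whose modulus is at least $\xi^2>k_p^2>0$. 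Thus $\xi^2+\gamma_p\gamma_s$ never vanishes on the compact set $\{|\xi|\le k_s\}$ and, being continuous, is bounded away from $0$; this is the one place where a little care is needed, and it mirrors the corresponding computation in \cite{r7,r8}. Everything else is the standard Gårding argument (Plancherel, trace theorem, interpolation, Young's inequality).
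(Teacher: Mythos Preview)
Your proposal is correct and matches the approach the paper intends: the paper omits the proof of Lemma~3.3 (deferring to \cite{r8}), but the text preceding the lemma already reduces the task to estimating $\int_{|\xi|\le k_s}\Re M(\xi)|\hat u|^2\,\mathrm{d}\xi$, and the paper's own proof of the Neumann analogue (Lemma~4.3, specifically \eqref{eq4.8}) uses exactly your strategy---bound the symbol uniformly on the compact low-frequency set and then absorb the resulting $\|u\|_{L^2(\Gamma)}^2$ term via trace plus interpolation and Young's inequality. One minor slip: your intermediate step $\|u\|_{H^{1/2}(\Gamma)}^2\le C\|u\|_{H^1(D)}\|u\|_{L^2(D)}$ is not a standard inequality; either invoke the multiplicative trace inequality $\|u\|_{L^2(\Gamma)}^2\le C\|u\|_{H^1(D)}\|u\|_{L^2(D)}$ directly, or follow the paper's route in \eqref{eq4.8} and use $\|u\|_{L^2(\Gamma)}^2\le C\|u\|_{H^{1/2}(D)}^2$ together with interpolation in $D$.
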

Next the uniqueness and existence of the variation problem can be derived by Lemma 3.3 and the Fredholm alternative.
\begin{Th}
    Variation problem 1 admits a unique solution $u\in V$.
\end{Th}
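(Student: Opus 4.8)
The plan is to derive Theorem 3.1 from the Fredholm alternative, so that the only substantial point is uniqueness. By Lemma 3.1 the form $B$ is bounded on $V\times V$, and by Lemma 3.3 the form $\tilde B(u,v):=B(u,v)+C_2\int_D u\cdot\bar v\,\mathrm dx$ satisfies $\Re\tilde B(u,u)\ge C_1\|\nabla u\|_{L^2(D)^2}^2$. Since every $u\in V$ vanishes on $S$, the Poincar\'e inequality gives $\|\nabla u\|_{L^2(D)^2}\ge c\,\|u\|_{H^1(D)^2}$, so $\tilde B$ is coercive and induces an invertible operator on $V$ by Lax--Milgram. As $B=\tilde B-C_2(\cdot,\cdot)_{L^2(D)^2}$ and the embedding $V\hookrightarrow L^2(D)^2$ is compact, $B$ induces an invertible operator plus a compact one; hence Variation problem 1 is Fredholm of index zero, and it suffices to prove that $B(u,v)=0$ for all $v\in V$ forces $u\equiv0$.

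So let $u\in V$ satisfy $B(u,v)=0$ for all $v\in V$; then $\Delta^*u+\omega^2u=0$ in $D$, $u=0$ on $S$, $Tu=\mathcal Tu$ on $\Gamma$, and $u=0$ on $\Gamma^c$, so that the zero-extended trace $u_0:=u|_{\{x_2=0\}}$ is supported in $\overline\Gamma$ and lies in $H^{1/2}_0(\Gamma)^2$. Taking $v=u$ in $B(u,u)=0$ and noting that $\int_D\mathcal E(u,\bar u)-\omega^2|u|^2\,\mathrm dx$ is real yields $\Im\int_\Gamma\mathcal Tu\cdot\bar u\,\mathrm ds=0$; by the definition of $\mathcal T$ and Plancherel's identity (and since $\bar u_0=0$ on $\Gamma^c$) this rewrites as
\[
\langle Q\,\hat u_0,\hat u_0\rangle_{L^2(\mathbb R)}=0,\qquad Q(\xi):=\frac1{2i}\big(M(\xi)-\overline{M(\xi)}^{\top}\big),
\]
which is exactly the statement that the outgoing energy flux of the radiating extension \eqref{eq2.14} of $u_0$ vanishes. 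A direct inspection of \eqref{eq2.17} shows that $M(\xi)$ is Hermitian for $|\xi|>k_s$, so $Q(\xi)=0$ there; that $Q(\xi)=\mathrm{diag}\big(\omega^2\gamma_p,\omega^2\gamma_s\big)/(\xi^2+\gamma_p\gamma_s)$ is positive definite for $|\xi|<k_p$; and that $Q(\xi)\ge0$ for $k_p<|\xi|<k_s$. Hence the displayed identity forces $\hat u_0(\xi)=0$ for $|\xi|<k_p$. Since $u_0$ has compact support, $\hat u_0$ is the restriction to $\mathbb R$ of an entire function (Paley--Wiener), so its vanishing on the interval $(-k_p,k_p)$ forces $\hat u_0\equiv0$, i.e. $u_0\equiv0$. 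Consequently $u=0$ and $Tu=\mathcal Tu_0=0$ on $\Gamma$, so $u$ has vanishing Cauchy data on the flat open subset $\Gamma$ of $\partial D$; extending $u$ by zero across $\Gamma$ produces a weak solution of the Navier equation in a neighborhood of $\Gamma$ that vanishes on an open set, and the unique continuation property of the Navier system propagates $u\equiv0$ through the connected domain $D$. Combined with the Fredholm reduction above, this proves unique solvability.

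The essential obstacle is the uniqueness argument, and within it the sign analysis of the symbol $Q(\xi)$ on the propagating band $|\xi|\le k_s$ together with the passage from the vanishing of the propagating modes to $u\equiv0$; this is precisely the step that has so far resisted a three-dimensional treatment. The remaining ingredients---the Fredholm setup, the half-space representation \eqref{eq2.14}, the energy identity via Betti's formula, and the concluding unique continuation---are routine.
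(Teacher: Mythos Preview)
Your argument is correct and follows essentially the same route as the paper: Fredholm reduction via Lemmas 3.1 and 3.3, then uniqueness by showing the propagating part of $\hat u_0$ vanishes from $\Im\langle\mathcal Tu,u\rangle_\Gamma=0$, invoking analyticity of the compactly supported trace, and concluding with unique continuation. The only cosmetic difference is that the paper carries out the sign analysis of the symbol in the Helmholtz-decomposition coordinates $(P,S)$ of \eqref{eq2.12}--\eqref{eq2.13}, which diagonalizes the quadratic form directly (see \eqref{eq2.17} and the matrix $A$), whereas you work with $Q(\xi)=\tfrac{1}{2i}(M-\overline M^{\top})$ in the original $\hat u_0$ coordinates; the two computations are equivalent and yield the same conclusion on each frequency band.
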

\begin{proof}
    Assume $g=0$. We have\[0=\Im\,\langle g,u\rangle_{\Gamma}=-\Im\,B(u,u)=\Im\,\left\langle\mathcal{T}u,u\right\rangle_{\Gamma}=\Im\,\left
    \langle M \hat{u},\hat{u}\right\rangle_\mathbb{R}.\]
    Combing (2.12) and (2.17) gives
    \[
   \left
    \langle M \hat{u},\hat{u}\right\rangle_\mathbb{R}= 
     \int_{\mathbb{R}}
    i\left(\begin{array}{cc}
         P  \\
         S 
    \end{array}\right)^\top\left(\begin{array}{cc}
       \mu\xi\gamma_p  & \omega^2-\mu\xi^2 \\
        \omega^2-\mu\xi^2 & -\mu\xi\gamma_s
    \end{array}\right)\left(\begin{array}{cc}
        \xi & \bar{\gamma_s} \\
        \bar{\gamma_p} & -\xi
    \end{array}\right)\left(\begin{array}{cc}
         \bar{P}  \\
         \bar{S} 
    \end{array}\right)\,\mathrm{d}\xi.
    \]
    Denote the matrix by \[
A=\left(\begin{array}{cc}
       \mu\xi\gamma_p  & \omega^2-\mu\xi^2 \\
        \omega^2-\mu\xi^2 & -\mu\xi\gamma_s
    \end{array}\right)\left(\begin{array}{cc}
        \xi & \bar{\gamma_s} \\
        \bar{\gamma_p} & -\xi
    \end{array}\right).
    \] 
Taking the real part implies\[
\Re\,A=\left\{\begin{array}{ccc}
    0 & \text{ if  } |\xi|>k_s, \\
    \omega^2 \text{diag}(0,\gamma_s) & \qquad\text{ if }\, k_p<|\xi| \le k_s, \\
    \omega^2 \text{diag}(\gamma_p,\gamma_s) & \text{ if }\, |\xi| \le k_p.
\end{array}\right.
\]
So we have\begin{equation}\Im\langle\mathcal{T}u,u\rangle_{\Gamma}=\omega^2\left(\int_{|\xi| \le k_p}\gamma_p(\xi)|P(\xi)|^2\,\mathrm{d}\xi+\int_{|\xi| \le k_s}\gamma_s(\xi)|S(\xi)|^2\,\mathrm{d}\xi\right)=0.\end{equation}
Hence
\[P(\xi)=S(\xi)=0\quad\text{for}\quad|\xi| \le k_p,\]
which implies
\[\hat{u}(\xi,0)=0\quad\text{for}\quad|\xi| \le k_p.\]
Since $u$ has compact support on $\{y=0\}$, $\hat{u}(\xi,0)$ is analytic with respect to $\xi$. By unique continuation, $\hat{u}(\xi,0)=0$ for $\xi\,\in\mathbb{R}$. So  $u(x,0)=0$ and $Tu=\mathcal{T}u=0$ on $\Gamma$. By Holmgren's uniqueness theorem,  $u=0$ in $D$ which implies uniqueness. Finally, combing Lemma 3.3 and Fredholm alternative yields existence. 
\end{proof}
\emph{Remark}. Theorem 3.1 shows that there admits a unique solution $u$ to \emph{Variation problem 1} in bounded domain $D$. We can extend $u_s=u-u_i-u_r$ to $\{x_2>0\}$ by (2.14), though it may not satisfy Kupradze-Sommerfeld radiation condition. In fact, the condition that $u_s$ satisfies (2.14) is weaker than Kupradze-Sommerfeld radiation condition of half-plane \eqref{eq2.9}-\eqref{eq2.10} (see \cite{r1}). But we can extend $u_s$ by
\begin{equation}\label{eq3.3}
u_s=\int_{\Gamma}T_z G_D(x,y)u_s(y)\,\text{d}\,s(y),\quad x\in\{x_2>0\},
\end{equation} where $G_D(x,y)$ is the half-space Green tensor for Dirichlet problem. According to the property of Green tensor we can verify that the corresponding $u$ is the solution to Drichlet problem in $D \cup \{x_2 \ge 0\}$ (see \cite{r1}). Then by representation theorem we know that any solution to Dirichlet problem in $D \cup \{x_2 \ge 0\}$ satisfies \eqref{eq3.3}, which means the extension is unique.
\section{Existence and uniqueness for Neumann problem}
Recalling the two variation problems in Section 2, we can find that the main difference between Dirichlet problem and Neumann one is that $M$ in \eqref{eq2.17} is replaced by $M^{-1}$, i.e.,
\begin{equation} \label{eq4.1}
M^{-1}=\frac{\rho(\xi)}{id(\xi)}\left(\begin{array}{cc}
        \omega^2\gamma_s & \xi\omega^2-\xi\mu\rho \\
        -\xi\omega^2+\xi\mu\rho & \omega^2\gamma_p
    \end{array}
    \right),
\end{equation}
where\[
\rho=\xi^2+\gamma_s\gamma_p,\,d=\omega^4\gamma_s\gamma_p+(\xi\omega^2-\xi\mu\rho)^2.
\]
 This section shows that $M^{-1}$ has similar properties to $M$ though it is more complicated. Before proceed to the uniqueness, we give the following lemma to show that $\varLambda$ is continuous, which implies the sesquilinear form is also continuous.
\begin{Le}
    The NtD operator $\varLambda$ is a bounded linear operator from $H^{-1/2}_0(\Gamma)^2$ to $H^{1/2}(\Gamma)^2$.
\end{Le}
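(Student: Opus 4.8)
The plan is to read the mapping property off the Fourier symbol $M^{-1}(\xi)$ in \eqref{eq4.1}, after splitting $\mathbb{R}$ into a high--frequency part $|\xi|\ge R_0$ and a compact part $|\xi|\le R_0$. First I would treat $|\xi|\to\infty$: from $\gamma_p(\xi),\gamma_s(\xi)=i|\xi|+O(|\xi|^{-1})$ it follows that $\rho(\xi)=\xi^2+\gamma_p\gamma_s$ stays bounded, with limit $\tfrac{1}{2}(k_p^2+k_s^2)$, while, using $k_p^2=\omega^2/(2\mu+\lambda)$ and $k_s^2=\omega^2/\mu$, the ratio $d(\xi)/\xi^2$ tends to a nonzero constant. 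Since every entry of the matrix in \eqref{eq4.1} is $O(|\xi|)$ (the diagonal entries $\omega^2\gamma_p,\omega^2\gamma_s$ and the off--diagonal $\xi\omega^2-\xi\mu\rho$), this gives $|M^{-1}(\xi)|\le C(1+|\xi|^2)^{-1/2}$ for $|\xi|\ge R_0$; that is, $M^{-1}$ is a symbol of order $-1$ at infinity, the order expected of an NtD map. Hence on $\{|\xi|\ge R_0\}$ one has $(1+|\xi|^2)^{1/2}|M^{-1}(\xi)|^2\le C(1+|\xi|^2)^{-1/2}$, so $\int_{|\xi|\ge R_0}(1+|\xi|^2)^{1/2}|M^{-1}\hat f|^2\,\mathrm{d}\xi\le C\|f\|_{H^{-1/2}}^2$.

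Next I would locate the real zeros of $d$ on $\{|\xi|\le R_0\}$. Using $\omega^2=\mu k_s^2=\mu(\xi^2+\gamma_s^2)$ one obtains $\xi\omega^2-\xi\mu\rho=\mu\xi\gamma_s(\gamma_s-\gamma_p)$, hence
\[
d(\xi)=\mu^2\gamma_s(\xi)\,\bigl(k_s^4\gamma_p+\xi^2\gamma_s(\gamma_s-\gamma_p)^2\bigr).
\]
For $|\xi|>k_s$, Lemma 3.2 gives $-\Re M(\xi)>0$, so $M(\xi)$ has negative definite Hermitian part, is therefore invertible, and $d(\xi)\ne0$ there. For $|\xi|\le k_s$ a short computation of the imaginary part of the second factor (in the spirit of Lemma 3.2 and of \cite{r7}) shows it does not vanish, and $d(\pm k_p)\ne0$ is immediate, so on the real axis $d$ vanishes only at $\xi=\pm k_s$. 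At those points the factor $\gamma_s$ in $d$ cancels the $\gamma_s$ appearing in the $(1,1),(1,2),(2,1)$ entries of the numerator in \eqref{eq4.1}, while the $(2,2)$ entry $\rho\omega^2\gamma_p/(id)$ retains only an integrable singularity of order $|\xi\mp k_s|^{-1/2}$, since $\gamma_s(\xi)=O(|\xi\mp k_s|^{1/2})$ and $\gamma_p(\pm k_s)\ne0$. Thus on $\{|\xi|\le R_0\}$ the symbol $M^{-1}$ is bounded away from $\pm k_s$, with an $L^1_{\mathrm{loc}}$ singularity at those points.

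Finally I would absorb this residual singularity in the norm estimate, and this is the step where the space $H^{-1/2}_0(\Gamma)^2$ — which, by \cite{r15}, coincides with the dual of $H^{1/2}(\Gamma)^2$ — is essential: for $f$ there, the zero extension $\tilde f$ carries exactly the extra regularity needed for $M^{-1}\widehat{\tilde f}$ to remain in the weighted $L^2$ space defining $H^{1/2}(\Gamma)^2$, giving $\|\varLambda f\|_{H^{1/2}(\Gamma)^2}\le C\|f\|_{H^{-1/2}_0(\Gamma)^2}$ and hence continuity of $\varLambda$. The argument parallels the Dirichlet case (Lemma 3.1) and the three--dimensional computation in \cite{r8}; I expect the branch point of $M^{-1}$ at $|\xi|=k_s$ — which has no analogue for the DtN operator — to be the main obstacle, and its handling to rely on the characterization of $H^{-1/2}_0(\Gamma)^2$ from \cite{r15}.
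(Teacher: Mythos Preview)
The paper's argument is much shorter than yours. It writes the operator norm as a ratio of weighted Fourier integrals, computes only the $|\xi|\to\infty$ asymptotics ($\gamma_p,\gamma_s\sim i|\xi|$, $\rho\to(k_p^2+k_s^2)/2$, $d\sim c\,\xi^2$), asserts a \emph{uniform} pointwise bound $\|M^{-1}(\xi)\|\le C(1+\xi^2)^{-1/2}$ (in fact the paper writes $(1+\xi^2)^{-1}$), and concludes. There is no low/high--frequency split, no discussion of the real zeros of $d$, and no mention of the threshold $|\xi|=k_s$.

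Your factorization $d(\xi)=\mu^2\gamma_s\bigl(k_s^4\gamma_p+\xi^2\gamma_s(\gamma_s-\gamma_p)^2\bigr)$ shows that $d(\pm k_s)=0$ while the $(2,2)$ numerator $\rho\,\omega^2\gamma_p$ does not vanish there, so $M^{-1}$ is genuinely singular at $\xi=\pm k_s$ and the paper's uniform pointwise bound cannot hold as stated. Your proposed repair, however, is not complete: a $|\xi\mp k_s|^{-1/2}$ singularity in an entry of $M^{-1}$ produces a $|\xi\mp k_s|^{-1}$ term in $|M^{-1}\hat f|^2$, which is \emph{not} locally integrable, so ``$L^1_{\mathrm{loc}}$ singularity'' does not suffice for the weighted $L^2$ estimate defining the $H^{1/2}$ norm. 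The appeal to extra regularity in $H^{-1/2}_0(\Gamma)^2$ is too vague---for compactly supported $f$ the transform $\hat f$ is entire and nothing forces $\hat f_2(\pm k_s)=0$. To close this you would need either to exploit that the paper's own definition of $H^{-1/2}_0(\Gamma)^2$ already builds $\varLambda$ into it (which makes the lemma partly tautological), or to argue that the physical Neumann data $Tu_s$ has Fourier transform vanishing at the thresholds $\pm k_s$. The paper does neither, so on this point your proposal already probes deeper than the published proof, even though it does not yet settle the issue.
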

\begin{proof}\begin{equation} \label{eq4.2}
    \|\varLambda\|^2=\sup\limits_{u\in W,\,u\not=0}\frac{\|\varLambda u\|^2_{H^{1/2}(\Gamma)^2}}{\|u\|^2_{H^{-1/2}(\Gamma)^2}}=\sup\limits_{u\in W,\,u\not=0}\frac{\int_{\mathbb{R}}|M^{-1}\hat{u}|^2(1+\xi^2)^{1/2}}{\int_{\mathbb{R}}|\hat{u}|^2(1+\xi^2)^{-1/2}}.
    \end{equation}
   When $|\xi|\to+\infty$, it is easy to calculate \[\gamma_p\sim i|\xi|,\,\gamma_s\sim i|\xi|,\,\text{and}\,(\xi^2+\gamma_s\gamma_p)\sim\frac{k^2_p+k^2_s}{2}.
    \]
    It turns out that
    \begin{equation} \label{eq4.3}
    	\left[ \xi\omega^2-\xi\mu(\xi^2+\gamma_s\gamma_p)\right] \sim\xi\left(\omega^2-\frac{k_s^2+k_p^2}{2}\right),\end{equation}
    and\begin{equation} \label{eq4.4}
    	\left[\omega^4\gamma_s\gamma_p+(\xi\omega^2-\xi\mu(\xi^2+\gamma_s\gamma_p))^2\right] \sim\xi^2\left[ \omega^4-\left(\omega^2-\mu\frac{k^2_s+k^2_p}{2}\right)^2\right]. \end{equation}
     Combing \eqref{eq4.1} and \eqref{eq4.3}-\eqref{eq4.4} gives estimate of $\|M^{-1}\|$,
    \[\|M^{-1}\|\le C(1+\xi^2)^{-1}.\]
    Inserting it into \eqref{eq4.2} yields $\|\varLambda\|\le C$, where $C$ depends on $\omega,\,\mu$ and $\lambda$.
\end{proof}
Then we consider $-\Re\,M^{-1}$ similarly as Section 2. 
\begin{Le}
    $-\Re\,M^{-1}(\xi)>0$ for sufficiently large $|\xi|$.
\end{Le}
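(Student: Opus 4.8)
The plan is to work on the range $|\xi|>k_s$, which is all that ``sufficiently large $|\xi|$'' requires. There both $\gamma_p(\xi)=\sqrt{k_p^2-\xi^2}$ and $\gamma_s(\xi)=\sqrt{k_s^2-\xi^2}$ are purely imaginary with positive imaginary part, so write $\gamma_p=i|\gamma_p|$, $\gamma_s=i|\gamma_s|$ with $|\gamma_p|=\sqrt{\xi^2-k_p^2}>0$ and $|\gamma_s|=\sqrt{\xi^2-k_s^2}>0$. Then $\gamma_s\gamma_p=-|\gamma_s||\gamma_p|$ is real, hence $\rho=\xi^2+\gamma_s\gamma_p$, the entry $\xi\omega^2-\xi\mu\rho=\xi(\omega^2-\mu\rho)$, and $d=\omega^4\gamma_s\gamma_p+\xi^2(\omega^2-\mu\rho)^2$ are all real. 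Substituting into \eqref{eq4.1} and using $1/i=-i$ gives
\[
M^{-1}(\xi)=\frac{\rho}{d}\,H(\xi),\qquad H(\xi)=\left(\begin{array}{cc}\omega^2|\gamma_s| & -i\xi(\omega^2-\mu\rho)\\ i\xi(\omega^2-\mu\rho) & \omega^2|\gamma_p|\end{array}\right),
\]
where $H(\xi)$ is Hermitian and $\rho/d$ is a real scalar; in particular $\Re M^{-1}=M^{-1}$, so the claim reduces to showing that $M^{-1}(\xi)$ is negative definite for large $|\xi|$.

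For the $2\times2$ Hermitian matrix $H(\xi)$ one has $\mathrm{tr}\,H=\omega^2(|\gamma_s|+|\gamma_p|)>0$ and $\det H=\omega^4|\gamma_s||\gamma_p|-\xi^2(\omega^2-\mu\rho)^2=-d$. By the trace--determinant criterion, $H(\xi)>0$ precisely when $d<0$. Moreover $\rho=\xi^2-\sqrt{(\xi^2-k_p^2)(\xi^2-k_s^2)}>0$ for $|\xi|\ge k_s$ (equivalently $(k_p^2+k_s^2)\xi^2>k_p^2k_s^2$), so once $d<0$ we get $\rho/d<0$ and therefore $M^{-1}=(\rho/d)H<0$, that is, $-\Re M^{-1}=-M^{-1}>0$. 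Hence everything comes down to the single estimate $d<0$ for all sufficiently large $|\xi|$.

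To obtain it, expand $\sqrt{(\xi^2-k_p^2)(\xi^2-k_s^2)}=\xi^2-\tfrac12(k_p^2+k_s^2)+O(\xi^{-2})$, so that $\rho\to\tfrac12(k_p^2+k_s^2)$ and $\omega^2-\mu\rho\to\omega^2-\tfrac{\mu}{2}(k_p^2+k_s^2)$. Using $k_s^2=\omega^2/\mu$ and $k_p^2=\omega^2/(\lambda+2\mu)$ this limit equals $\omega^2(\mu+\lambda)/\bigl(2(2\mu+\lambda)\bigr)$, which lies strictly between $0$ and $\omega^2$ because $\lambda,\mu>0$; consequently $\bigl(\omega^2-\tfrac{\mu}{2}(k_p^2+k_s^2)\bigr)^2<\omega^4$. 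Since $|\gamma_s||\gamma_p|=\xi^2+O(1)$, this yields
\[
d=-\omega^4|\gamma_s||\gamma_p|+\xi^2(\omega^2-\mu\rho)^2=\xi^2\Bigl(\bigl(\omega^2-\tfrac{\mu}{2}(k_p^2+k_s^2)\bigr)^2-\omega^4+o(1)\Bigr)<0
\]
for $|\xi|$ large, which finishes the proof.

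I expect the only genuine obstacle to be pinning down the sign of $d$: one must keep the correct branch of $\gamma_p,\gamma_s$ so that the leading term of $\omega^4\gamma_s\gamma_p$ is $-\omega^4\xi^2$ (not $+\omega^4\xi^2$), and then verify the \emph{strict} inequality $\bigl|\omega^2-\tfrac{\mu}{2}(k_p^2+k_s^2)\bigr|<\omega^2$, which is exactly where the positivity of both Lam\'e constants enters. Once $d<0$ is established, the remainder is just the trace--determinant test for a $2\times2$ Hermitian matrix together with $\rho>0$.
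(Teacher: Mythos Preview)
Your proof is correct and follows essentially the same route as the paper: for $|\xi|>k_s$ both reduce the positive-definiteness of $-\Re M^{-1}$ to the $2\times2$ trace--determinant criterion and then to the single asymptotic estimate $d(\xi)<0$, obtained from $\rho\to\tfrac12(k_p^2+k_s^2)$ and the strict inequality $\bigl(\omega^2-\tfrac{\mu}{2}(k_p^2+k_s^2)\bigr)^2<\omega^4$. Your packaging is slightly cleaner in that you observe $M^{-1}=(\rho/d)H$ with $H$ Hermitian (so $\Re M^{-1}=M^{-1}$ directly), and you give an explicit non-asymptotic argument for $\rho>0$ on $|\xi|\ge k_s$, whereas the paper checks the (1,1)-entry sign and the determinant separately; but the substance is the same.
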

\begin{proof}
    Taking real part of $M^{-1}$ for $|\xi|>k_s$, we have
    \[\Re\,M^{-1}=\frac{\rho(\xi)}{d(\xi)}\left(\begin{array}{cc}
        \omega^2|\gamma_s| & -i(\xi\omega^2-\xi\mu\rho) \\
        i(\xi\omega^2-\xi\mu\rho) & \omega^2|\gamma_p|
    \end{array}
    \right),\]
    with
    \[\rho(\xi)=\xi^2-|\gamma_s||\gamma_p|>0,\]
    and
    \[d(\xi)=-\omega^4|\gamma_s||\gamma_p|+(\xi\omega^2-\xi\mu\rho)^2.\]
    In order to prove $-\Re M^{-1}(\xi)>0$, it suffices to verify $\frac{\rho(\xi)}{d(\xi)}\omega^2|\gamma_s|<0$ and $\det{-\Re M^{-1}}>0$.
    Recall when $|\xi|\to\infty$, $\gamma_s\sim i|\xi|,\,\gamma_p\sim i|\xi|$ and $\rho\sim\frac{k^2_s+k^2_p}{2}$. It turns out that
    \[
    d\sim\xi^2\left(\left(\omega^2-\mu\frac{k^2_s+k^2_p}{2}\right)^2-\omega^4\right)=\omega^2\left(\frac{\mu}{4\mu+2\lambda}-\frac{3}{2}\right)<0,\] which means $\frac{\rho(\xi)} {d(\xi)}\omega^2|\gamma_s|<0$ for sufficiently large $|\xi|$. Direct calculation gives
    \[\det(-\Re\,M^{-1})=\frac{\rho^2}{d^2}(\omega^4|\gamma_s||\gamma_p|-(\xi\omega^2-\xi\mu\rho)^2)>0,\] for sufficiently large $|\xi|$.
    So $-\Re\,M^{-1}$ is positive definite for sufficiently large $|\xi|$. 
\end{proof}
Using the above lemma, we can estimate $\Re\,\mathcal{B}(u,u)$ and then get the following inequality.
\begin{Le}
 For any $u\in W$, we have $\Re\,\mathcal{B}(u,u)>C_1\|\nabla u\|^2_{L^2(D)^2}-C_2\|u\|^2_{L^2(D)^2}$, where $C_1,\,C_2$ are positive constant and they both depend on $\lambda,\,\mu$ and $\omega$.
\end{Le}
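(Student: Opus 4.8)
We follow the pattern of Lemma~3.3, the only structural change being that the boundary symbol on $\Gamma$ is now $M^{-1}$ instead of $M$. Taking the real part of $\mathcal{B}(u,u)$ and proceeding exactly as in Section~3, one has
\[
\int_D\mathcal{E}(u,\bar u)\,\mathrm{d}x=\mu\|\nabla u\|^2_{L^2(D)^2}+(\lambda+\mu)\|\nabla\cdot u\|^2_{L^2(D)^2}\ge\mu\|\nabla u\|^2_{L^2(D)^2},
\]
so that
\[
\Re\,\mathcal{B}(u,u)\ge\mu\|\nabla u\|^2_{L^2(D)^2}-\omega^2\|u\|^2_{L^2(D)^2}-\Re\int_\Gamma Tu\cdot\overline{\varLambda(Tu)}\,\mathrm{d}s.
\]
By Plancherel's identity and $\varLambda(f)=\mathcal{F}^{-1}(M^{-1}\hat f)$, the boundary term equals $\int_{\mathbb{R}}\langle\Re M^{-1}(\xi)\,\widehat{Tu}(\xi),\widehat{Tu}(\xi)\rangle\,\mathrm{d}\xi$, with $\Re M^{-1}=\tfrac12\big(M^{-1}+\overline{(M^{-1})}^{\top}\big)$, so the task reduces to bounding this integral from above by $\tfrac{\mu}{2}\|\nabla u\|^2_{L^2(D)^2}+C\|u\|^2_{L^2(D)^2}$.

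The plan is to split the integral at $|\xi|=R$, with $R$ so large that Lemma~4.2 applies. On $\{|\xi|>R\}$ the matrix $-\Re M^{-1}(\xi)$ is positive definite, so the corresponding piece of $-\Re\int_\Gamma Tu\cdot\overline{\varLambda(Tu)}\,\mathrm{d}s$ is nonnegative and may be dropped, exactly as Lemma~3.2 was used for the Dirichlet problem. On $\{|\xi|\le R\}$ the symbol $\Re M^{-1}$ is bounded except near the two points $|\xi|=k_s$, where the denominator $d(\xi)$ of \eqref{eq4.1} vanishes (note $d(\pm k_s)=0$, since $\mu k_s^2=\omega^2$); one checks that there $M^{-1}$, and in fact also its Hermitian part, is singular only of order $|\xi\mp k_s|^{-1/2}$, which is integrable. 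The remaining estimate $\big|\int_{|\xi|\le R}\langle\Re M^{-1}\widehat{Tu},\widehat{Tu}\rangle\,\mathrm{d}\xi\big|\le\tfrac{\mu}{2}\|\nabla u\|^2_{L^2(D)^2}+C\|u\|^2_{L^2(D)^2}$ is then obtained as in Lemma~3.3 and the three-dimensional computation of \cite{r8}: using the compatibility $\varLambda(Tu)=u$ on $\Gamma$ (so that $u|_\Gamma\in H^{1/2}(\Gamma)^2$), the boundedness of $\varLambda$ (Lemma~4.1), and the defining constraint $Tu\in H^{-1/2}_0(\Gamma)^2$ of $W$ to make the low-frequency part of $\widehat{Tu}$ harmless, one estimates the bounded-frequency term by a constant times $\|u\|^2_{L^2(\Gamma)^2}$ plus lower order terms, and finishes with the trace interpolation inequality $\|u\|^2_{L^2(\partial D)}\le\epsilon\|\nabla u\|^2_{L^2(D)}+C_\epsilon\|u\|^2_{L^2(D)}$, valid on the Lipschitz domain $D$, taken with $\epsilon=\mu/2$. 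Collecting terms gives $\Re\,\mathcal{B}(u,u)\ge\tfrac{\mu}{2}\|\nabla u\|^2_{L^2(D)^2}-(\omega^2+C)\|u\|^2_{L^2(D)^2}$, which is the lemma with $C_1=\mu/2$ and $C_2=\omega^2+C$.

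The main obstacle is the bounded-frequency estimate. Unlike the region $\{|\xi|>R\}$, there is no sign available, and one must simultaneously absorb the integrable singularities of $\Re M^{-1}$ at $|\xi|=k_s$ -- this is where the space $H^{-1/2}_0(\Gamma)^2$, Lemma~4.1 and the identity $u=\varLambda(Tu)$ on $\Gamma$ all enter -- and, crucially, recover the \emph{small} coefficient multiplying $\|\nabla u\|^2_{L^2(D)}$. A crude bound such as $\|Tu\|_{H^{-1/2}(\Gamma)^2}\le C\|u\|_{H^1(D)^2}$ only yields an uncontrolled multiple of $\|\nabla u\|^2_{L^2(D)}$ and is useless, so the gain has to come from routing the estimate through the trace $u|_\Gamma\in H^{1/2}(\Gamma)^2$ together with the interpolation inequality, precisely as in the Dirichlet case. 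Coercivity of $\mathcal{E}$, Plancherel, and the high-frequency discard via Lemma~4.2 are otherwise routine.
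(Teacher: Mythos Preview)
Your proposal is correct and follows essentially the same route as the paper: coercivity of $\mathcal{E}$, Plancherel together with Lemma~4.2 to discard the high-frequency part of the boundary term, and then the trace/interpolation inequality $\|u\|_{L^2(\Gamma)}^2\le\varepsilon\|u\|_{H^1(D)}^2+C(\varepsilon)\|u\|_{L^2(D)}^2$ to absorb the remaining low-frequency contribution. The paper makes the low-frequency step concrete via the chain $\int_{|\xi|\le k_0}\|M^{-1}\|\,|\widehat{Tu}|^2\,\mathrm{d}\xi\le C\|Tu\|_{H^{-1}(\Gamma)}^2\le C\|u\|_{L^2(\Gamma)}^2$ (the second inequality tacitly using the constraint $u=\varLambda(Tu)$ on $\Gamma$, hence $Tu=\mathcal{T}u$), and does not comment on the $|\xi|=k_s$ singularity of $M^{-1}$ that you flag; your invocation of $u=\varLambda(Tu)$ and Lemma~4.1 is exactly what the paper is using here.
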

\begin{proof}
Similar to Section 3, for $u\in W$, we have
    \begin{equation} \label{eq4.5} 
    	\Re\,\mathcal{B}(u,u)\ge\mu\|\nabla u\|^2_{L^2(D)^2}-\omega^2\|u\|^2_{L^2(D)^2}-\Re\,\langle Tu,\varLambda (Tu)\rangle_\Gamma.\end{equation}
    By Lemma 4.2, there exists $k_0>0$ such that $-\Re\,M^{-1}(\xi)>0$ for $|\xi|>k_0$.
    So it can be deduced that \begin{equation} \label{eq4.6}
-\Re\,\langle Tu,\varLambda(Tu)\rangle_\Gamma\ge-\Re\,\int_{|\xi|\le k_0}\widehat{Tu}\cdot\overline{M^{-1}\widehat{Tu}}\,\mathrm{d}\,\xi\ge -C\int_{|\xi|\le k_0}\|M^{-1}\| |\widehat{Tu}|^2\,\mathrm{d}\,\xi.
    \end{equation}
    Noting for $|\xi| \le k_0$,\[
\|M^{-1}\|\le C(1+\xi^2)^{-1/2}\le C(1+\xi^2)^{-1}.
    \]
    So we have\begin{equation} \label{eq4.7}
\int_{|\xi| \le k_0}\|M^{-1}\| |\widehat{Tu}|^2 \,\mathrm{d}\,\xi\le C\int_{|\xi| \le k_0}(1+\xi^2)^{-1}|\widehat{Tu}|^2\,\mathrm{d}\,\xi\le C\|Tu\|^2_{H^{-1}(\Gamma).}
    \end{equation}
    By the trace theorem and the interpolation inequality, we obtain
    \begin{equation}\label{eq4.8}
    	\|Tu\|^2_{H^{-1}(\Gamma)^2}\le C \|u\|^2_{L^2(\Gamma)^2}\le C\|u\|^2_{H^{1/2}(D)^2}\le \varepsilon\|u\|^2_{H^1(D)^2}+C(\varepsilon)\|u\|^2_{L^2(D)^2}.\end{equation}
    Here $C>0$ are different constants depending on $\mu,\,\lambda$ and $\omega$, $\varepsilon>0$ is sufficiently small and $C(\varepsilon)>0$ is dependent on $\mu,\,\lambda,\,\omega$ and $\varepsilon$.
    By combining \eqref{eq4.5}-\eqref{eq4.8}, we obtain the inequality
    \[
\Re\,\mathcal{B}(u,u)\ge\mu\|\nabla u\|^2_{L^2(D)^2}-\omega^2\|u\|^2_{L^2(D)^2}-C\varepsilon\|u\|^2_{H^1(D)^2}-C(\varepsilon)\|u\|^2_{L^2(D)^2}.\] As $\varepsilon$ is sufficiently small, it turns out that
\[\Re\,\mathcal{B}(u,u)
\ge C_1\|\nabla u\|^2_{L^2(D)^2}-C_2\|u\|^2_{L^2(D)^2}. \]
\end{proof}
Next, we can proceed to the uniqueness for the Neumann problem. Though $M^{-1}$ is more complicated than $M$, it is difficult to directly give the representation of the imaginary part of the sesquilinear form like (3.2). But the following proof shows that it is not necessary to get exact representation of the imaginary part of the sesquilinear form.
\begin{Th}
     Variation problem 2 admits a unique solution $u\in W$.
\end{Th}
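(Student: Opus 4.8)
The plan is to follow the same two--stage scheme as in the proof of Theorem 3.1: first show that the homogeneous problem has only the trivial solution, and then combine the G{\aa}rding inequality of Lemma~4.3, the continuity of $\mathcal B$ (a consequence of Lemma~4.1), and the compact embedding $H^1(D)^2\hookrightarrow\hookrightarrow L^2(D)^2$ to obtain, via the Fredholm alternative, existence of a necessarily unique solution $u_s\in W$. So the whole difficulty sits in the uniqueness step, and --- as announced in the paragraph preceding the statement --- the point is that one need not compute $\Re\,M^{-1}(\xi)$ for small $|\xi|$; instead one transports the boundary term back through the identity $\widehat{Tu_s}(\xi,0)=M(\xi)\,\hat u_s(\xi,0)$ obtained in Section 2 and re--uses the matrix $A$ that already appeared in the proof of Theorem 3.1.

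For uniqueness, assume the data vanishes, so that $\mathcal B(u_s,w)=0$ for all $w\in W$; in particular $\mathcal B(u_s,u_s)=0$. Since $\int_D \mathcal E(u_s,\bar u_s)-\omega^2 u_s\cdot\bar u_s\,\mathrm{d}x$ is real, taking imaginary parts gives $\Im\,\langle Tu_s,\varLambda(Tu_s)\rangle_\Gamma=0$. Now $Tu_s\in H^{-1/2}_0(\Gamma)^2$ has compact support, hence its zero--extended Fourier transform $\widehat{Tu_s}$ is an entire function, and by the Plancherel identity together with $\varLambda(f)=\mathcal F^{-1}(M^{-1}\hat f)$,
\[
\langle Tu_s,\varLambda(Tu_s)\rangle_\Gamma=\int_{\mathbb R}\widehat{Tu_s}(\xi)\cdot\overline{M^{-1}(\xi)\widehat{Tu_s}(\xi)}\,\mathrm{d}\xi .
\]
Writing $\hat u_s(\xi,0):=M^{-1}(\xi)\widehat{Tu_s}(\xi,0)$, so that $\widehat{Tu_s}(\xi,0)=M(\xi)\hat u_s(\xi,0)$ as in Section 2, the integrand equals $\bigl(M(\xi)\hat u_s(\xi,0)\bigr)\cdot\overline{\hat u_s(\xi,0)}$, whence the whole expression is the quantity $\langle M\hat u_s,\hat u_s\rangle_{\mathbb R}$ already evaluated in the proof of Theorem 3.1. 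Expressing $\hat u_s(\xi,0)$ through $(P(\xi),S(\xi))$ as in (2.12) and using the computed form of $\Re\,A$, we get
\[
0=\Im\,\langle Tu_s,\varLambda(Tu_s)\rangle_\Gamma=\omega^2\Bigl(\int_{|\xi|\le k_p}\gamma_p(\xi)|P(\xi)|^2\,\mathrm{d}\xi+\int_{|\xi|\le k_s}\gamma_s(\xi)|S(\xi)|^2\,\mathrm{d}\xi\Bigr),
\]
so $P(\xi)=S(\xi)=0$ for $|\xi|\le k_p$, hence $\hat u_s(\xi,0)=0$ and therefore $\widehat{Tu_s}(\xi,0)=M(\xi)\hat u_s(\xi,0)=0$ for $|\xi|\le k_p$.

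Since $\widehat{Tu_s}$ is analytic it then vanishes identically, so $Tu_s=0$ on $\Gamma$; consequently $\hat u_s(\cdot,0)\equiv0$, the half--plane extension by (2.14) is zero, and $u_s=\varLambda(Tu_s)=0$ on $\Gamma$. Thus the Cauchy data $(u_s,Tu_s)$ of $u_s$ vanishes on the flat --- hence real--analytic --- relatively open portion $\Gamma$ of $\partial D$, while the vanishing boundary data forces $Tu_s=0$ on $S$; Holmgren's uniqueness theorem then yields $u_s\equiv0$ in $D$. Feeding this uniqueness into the Fredholm alternative, as in Section 3, gives existence and hence unique solvability of Variation problem 2 in $W$; the solution may afterwards be extended to $\{x_2>0\}$ so as to satisfy the Kupradze--Sommerfeld radiation condition, exactly as in the Remark after Theorem 3.1.

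The step I expect to demand the most care is the reduction $\int_{\mathbb R}\widehat{Tu_s}\cdot\overline{M^{-1}\widehat{Tu_s}}\,\mathrm{d}\xi=\langle M\hat u_s,\hat u_s\rangle_{\mathbb R}$ with $\hat u_s(\cdot,0)=M^{-1}\widehat{Tu_s}$: in the Neumann case $u_s$ need not be integrable on $\{x_2=0\}$ --- only $Tu_s$ is compactly supported --- so $\hat u_s(\cdot,0)$ is merely a function carrying an $H^{1/2}$--weight rather than an $L^1$ function, and the change of variables $g\mapsto M^{-1}g$ together with the rearrangement of the quadratic form has to be legitimised, e.g. by truncating in $\xi$ and passing to the limit. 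Here it helps that absolute convergence of the full integral is guaranteed by $\|M^{-1}(\xi)\|\le C(1+\xi^2)^{-1}$ (from Lemma~4.1), while $\Re\,A$ is supported in $\{|\xi|\le k_s\}$, so only the behaviour of $M$ and $\hat u_s$ on a fixed compact set enters the imaginary part. One should also record that $M(\xi)$ is invertible for every real $\xi$, i.e. that $d(\xi)\ne 0$ on $\mathbb R$, which is already implicitly used in Lemma~4.1.
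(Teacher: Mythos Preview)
Your argument is correct and takes a genuinely different, more economical route than the paper. The paper establishes uniqueness by computing $\Im\,M^{-1}(\xi)$ directly in each of the three regimes $|\xi|>k_s$, $k_p<|\xi|\le k_s$, $|\xi|\le k_p$, verifying through a fairly lengthy algebraic calculation (involving auxiliary quantities $a_1,a_2,b_1,b_2$ and the identity $\det\Im\,M^{-1}=0$ in the intermediate range) that $\Im\,M^{-1}\le 0$ everywhere and is strictly negative for $|\xi|\le k_p$; this yields $\widehat{Tu}=0$ on $\{|\xi|\le k_p\}$ and the rest follows by analytic continuation and Holmgren, exactly as you do. You instead observe that writing $\hat u_s(\xi,0):=M^{-1}(\xi)\widehat{Tu_s}(\xi,0)$ converts the Neumann boundary term into $\langle M\hat u_s,\hat u_s\rangle_{\mathbb R}$, the very quantity whose imaginary part was already evaluated as (3.2) in the Dirichlet proof, so the sign analysis of $\Im\,M^{-1}$ becomes unnecessary. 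What your shortcut buys is the ``exact representation of the imaginary part of the sesquilinear form like (3.2)'' that the paper explicitly declares difficult to obtain directly; what the paper's brute-force computation buys is intrinsic sign information about $\Im\,M^{-1}$ itself, independent of the factorisation through $M$. Your caveat about legitimising the substitution is well placed: the pairing should be read as the $H^{-1/2}\times H^{1/2}$ duality (with $\widehat{Tu_s}\in H^{-1/2}$ and $M^{-1}\widehat{Tu_s}\in H^{1/2}$ by Lemma~4.1), and since $\Re\,A$ is supported in $\{|\xi|\le k_s\}$ the imaginary part reduces to an integral over a compact set where all objects are smooth, so the formal manipulation is justified.
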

\begin{proof}
    Similar to Section 3, we will prove the uniqueness, which yields existence. Assume $h=0$ (i.e., $u=u_s$) which implies $g=0$. Taking imaginary part of variation formula gives \[
\Im\,\mathcal{B}(u,u)=-\Im\,\langle Tu, \varLambda(Tu)\rangle_{\Gamma}=-\Im\,\langle\widehat{Tu}, M^{-1}\widehat{Tu}\rangle_{\Gamma}=\langle\Im\,M^{-1}
\widehat{Tu}, \widehat{Tu}\rangle_{\Gamma}=0
    .\]
    For $|\xi|>k_s$, we have \[ \rho=\xi^2-|\gamma_s||\gamma_p|,\,d=-\omega^4|\gamma_s||\gamma_p|+(\xi\omega^2-\xi\mu\rho)^2.
    \]
    Then take imaginary part of \eqref{eq4.1} to get $\Im M^{-1}=0$.
    For $|\xi| \le k_p$, we have\[\rho=\xi^2+|\gamma_s|\gamma_p|>0,\,d=\omega^4|\gamma_s||\gamma_p|+(\xi\omega^2-\xi\mu\rho)^2>0.\] Simple calculation gives \[\Im M^{-1}=
-\frac{\rho(\xi)}{d(\xi)}\left(\begin{array}{cc}
        \omega^2|\gamma_s| & 0 \\
       0 & \omega^2|\gamma_p|
    \end{array}
    \right).
    \]
It turns out that $\Im M^{-1}<0$.
    For $k_p<|\xi| \le k_s$, we have\[
\rho=\xi^2+i|\gamma_s|\gamma_p|,\,d=\omega^4|\gamma_s||\gamma_p|i+(\xi\omega^2-\xi\mu\rho)^2. 
    \]
    Direct calculation gives
    \[
\Im M^{-1}=\left(\begin{array}{cc}
   a_{1}  & b_1+b_2 \\
   -b_1-b_2  & a_{2}
\end{array}\right),
    \]
    where\[
a_{1}=-\omega^6|\gamma_s|^3|\gamma_p|^2+2\omega^2|\gamma_p|^2|\gamma_s|^3\xi^2\mu\zeta-\xi^4\omega^2|\gamma_s|\zeta^2+\omega^2\xi^4\mu^2|\gamma_p|^2|\gamma_s|^3,
    \]
    \[
a_{2}=-\xi\omega^6|\gamma_p|^2|\gamma_s|+2\xi^4\omega^2\mu\zeta|\gamma_s||\gamma_p|^2+\omega^2|\gamma_p|^2|\gamma_s|\xi^2\zeta^2-\xi^2\mu^2|\gamma_p|^4|\gamma_s|^3,
    \]
    \[
b_1=i\xi^3\omega^4\zeta|\gamma_s||\gamma_p|-2i\xi^5\mu\zeta^2|\gamma_s||\gamma_p|-i\xi^3\zeta^3|\gamma_s||\gamma_p|+i\xi^3\mu^2\zeta|\gamma_s|^3|\gamma_p|^3,
    \]
    \[
b_2=i\omega^4\xi\mu|\gamma_p|^3|\gamma_s|^3-2i\xi^3\mu^2|\gamma_p|^3|\gamma_s|^3\zeta+i\xi^5\mu\zeta^2|\gamma_s||\gamma_p|-i\xi^5\mu^3|\gamma_p|^3|\gamma_s|^3,
    \]
    with\[
\zeta=\omega^2-\xi^2\mu.
    \]
    In order to prove $\Im M^{-1} \le 0$, it suffices to verify $a_1<0$ and $\det{\Im M^{-1}} =0$. By direct calculation,\[
a_1=(-|\gamma_s|^2|\gamma_p|^2\zeta^2-\xi^4\zeta^2)\omega^2|\gamma_s|<0,\]
and \[
\det{\Im M^{-1}}=a_1a_2+(b_1+b_2)^2=d_1|\gamma_s|^6|\gamma_p|^6+d_2|\gamma_s|^4|\gamma_p|^4+d_3|\gamma_s|^2|\gamma_p|^2,
    \]
    where\begin{align*}
d_1&=\omega^4\xi^2\mu^2(\omega^4-\xi^2\mu(2\omega^2-\xi^2\mu))-(\zeta\xi\mu\omega^2)^2=0,\\
d_2&=\xi^6\omega^4\mu^2(\omega^4-\xi^2\mu(2\omega^2-\xi^2\mu))+\xi^6\omega^4\mu^2\zeta^2-2\xi^6\mu^2\omega^4\zeta^2=0,\\   
d_3&=\xi^{10}\zeta^2\omega^4\mu^2-(\xi^3\omega^4\zeta-\xi^3\omega^2\zeta^2)^2=0.
    \end{align*}
 Therefore it turns out that $\det \Im M^{-1}=0$, and thus $\Im M^{-1} \le 0$.
    
    In summary,\[
\Im\,M^{-1}\left\{\begin{array}{ccc}
    =0 & \text{if}\, |\xi|>k_s, \\
    \le 0 & \text{if}\, k_p<|\xi| \le k_s, \\
    <0 & \text{if}\, |\xi| \le k_p.
\end{array}\right.
    \]
    Hence $\widehat{Tu}=0$ for $|\xi| \le k_p$. Similarly to Section 3, by unique continuation and Holmgren's uniqueness theorem, $u=0$ in $D$ which implies uniqueness. Combing Lemma 4.3 and Fredholm alternative yields existence.
    
\end{proof}
\emph{Remark}. Similarly to Section 2, we can extend $u_s$ to $\{x_2>0\}$ by\begin{equation}\label{eq4.9}
u_s=
-\int_{\Gamma} G_N(x,y)T u_s(y)\,\text{d}s(y)\quad x\in\{x_2>0\},
\end{equation}where $G_N(x,y)$ is the half-space Green tensor for Neumann problem. Obviously by representation theorem, this extension is unqiue. Hence we get the unique solution to Neumann problem in $D\cup \{x_2 \ge 0\}$.
\section{Inverse cavity problem}
In this section, an inverse cavity problem, i.e., to reconstruct the unknown cavity $S$ from $u|_\Gamma$, is considered by investigating 
the Fr$\acute{\rm e}$chet derivative of the solution operator. For Dirichlet problem, the Frech$\acute{\rm e}$t derivative implies a local stability result of inverse cavity problem directly. This can be deduced by the fact that when the Dirichlet data of Frech$\acute{\rm e}$t derivative on the  boundary of the cavity vanishes, the solution $u$ to original problem is zero. However, for Neumann case, the Frech$\acute{\rm e}$t derivative satisfies different boundary value condition, which results in major difference from the Dirichlet case. 
\subsection{Dirichlet case}

Let $S$ be $C^2$ and $h(x) \in C^2(S,\mathbb{R})^2$ satisfying $h(x)|_{S\cap\Gamma}=0$. Denote by $S_h=\{x+h(x):x\in S\}$ and $D_h$ the domain with boundary $S\cup \Gamma$. Similarly denote $V_h=\{u \in H^1(D_h)^2 : u=0, \,\text{on}\, S_h \}$. Define the solution operator $\mathcal{U}$ : $C^2(S,\mathbb{R}^2) \to H^{1/2}(\Gamma)^2$ by \[
\mathcal{U}(h)=u_h|_\Gamma,
\]where $u_h$ is the solution to variation problem for Dirichlet boundary condition, i.e., \begin{equation} \label{eq5.1}
B_h(u_h,v_h)=(g,v_h)_\Gamma, \quad \forall v_h \in V_h
\end{equation} with \begin{equation} \label{eq5.2}
B_h(u_h,v_h)=\int_{D_h}\mathcal{E}(u_h,\bar{v}_h)-\omega^2 u_h \cdot \bar{v} \,\mathrm{d}x-\int_\Gamma \mathcal{T}u \cdot \bar{v} \,\mathrm{d} s.
\end{equation} Extend $h(x)$ to $C^2(\bar{D},\mathbb{R}^2)$  and still denote it by $h$ such that $
h(x)|_\Gamma=0
$, which gives \begin{equation} \label{eq5.3}
\|h\|_{H^{1,\infty}(D)^2}\le C\|h\|_{H^{1,\infty}(S)^2}.
\end{equation}

 Denote $\mathcal{H}_h$ by $
\mathcal{H}_h(y)=y+h(y),\quad y \in D$. Obviously, it is invertible for sufficiently small $\|h\|_{1,\infty}$. Then taking the variable transform $x=\mathcal{H}_h(y)$ implies 
\begin{align*}
    B_h(u_h,v_h)=&\mu\int_{D} \sum_{j=1}^2\nabla\tilde{u}_{h_j}\mathcal{J}_{\mathcal{H}_h^{-1}}\mathcal{J}_{\mathcal{H}_h^{-1}}^\top\nabla \bar{\tilde{v}}_{h_j}\det{\mathcal{J}_{\mathcal{H}_h}}\,\text{d}y \\
    &+(\lambda+\mu)\int_{D} (\nabla\tilde{u}_h:\mathcal{J}_{\mathcal{H}_h^{-1}})(\nabla\bar{\tilde{v}}_h:\mathcal{J}_{\mathcal{H}_h^{-1}}^\top)\det{\mathcal{J}_{\mathcal{H}_h}}\,\text{d}y \\
    &-\omega^2\int_{D}\tilde{u}_h\cdot\bar{\tilde{v}}_h\det{\mathcal{J}_{\mathcal{H}_h}}\,\text{d}y -\int_{\Gamma}\mathcal{T}\tilde{u}_h\cdot\bar{\tilde{v}}_h \,\text{d}s(y) :=B^h(\Tilde{u}_h, \Tilde{v}_h),\end{align*}
where $\tilde{u}_h=u_h \circ \mathcal{H}_h \in V$,  $\tilde{v}_h=v_h \circ \mathcal{H}_h \in V$. Hence $B^h$ is a sesquilinear form on $V \times V$. Thus the variation formula \eqref{eq5.1} becomes
\begin{equation} \label{eq5.4}
    B^h( \Tilde{u}_h,v)=(g,v)_\Gamma,\quad \forall v \in V.
\end{equation} 
Next we proceed to derive the Fr$\acute{\rm e}$chet derivative of the solution operator $\mathcal{U}$. Let $\mathcal{U}'(0)$ : $C^2(S,\mathbb{R})^2 \to H^{1/2}(\Gamma)^2$ is the Fr$\acute{\rm e}$chet derivative of $\mathcal{U}$ at $0$. Notice that $C^2(S,\mathbb{R})^2$ is equipped with norm $\|\cdot\|_{H^{1,\infty}(S)^2}$. The following theorem shows for $h \in C^2(S,\mathbb{R})^2$, $\mathcal{U}'(0)h$ satisfies the homogeneous Navier equation in $D$, the homogeneous TBC on $\Gamma$ and an inhomogeneous Dirichlet condition \eqref{eq5.7} on $S$.
\begin{Th}
   Suppose $h \in C^2(S, \mathbb{R})^2$ satisfying $h=0$ on $S \cap \Gamma$. Let $u^*$ is the solution to the problem \begin{align} \label{eq5.5}
        (\Delta^*+\omega^2)u^*=0, \quad &{\rm in} \quad D,  \\ \label{eq5.6}
        Tu^*=\mathcal{T} u^*, \quad &{\rm on}\quad \Gamma,  \\ \label{eq5.7}
        u^*=-(n \cdot h) \partial_n u, \quad &{\rm on} \quad S.
    \end{align}
    Then $\mathcal{U}'(0)h=u^*|_\Gamma$.
\end{Th}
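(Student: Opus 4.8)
The plan is to differentiate the transformed variational equation \eqref{eq5.4} with respect to $h$ at $h=0$ and identify the limit of the difference quotients. First I would establish that the map $h \mapsto B^h$ is Fr\'echet differentiable as a map into the space of bounded sesquilinear forms on $V \times V$; this is routine because the Jacobian $\mathcal{J}_{\mathcal{H}_h}$, its inverse $\mathcal{J}_{\mathcal{H}_h^{-1}}$, and $\det \mathcal{J}_{\mathcal{H}_h}$ depend smoothly (indeed polynomially, after inversion) on $\nabla h$, and $\|h\|_{H^{1,\infty}(D)^2}$ is controlled by $\|h\|_{H^{1,\infty}(S)^2}$ through \eqref{eq5.3}. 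Writing $\mathcal{J}_{\mathcal{H}_h} = I + \nabla h$, one has $\det \mathcal{J}_{\mathcal{H}_h} = 1 + \nabla\cdot h + O(\|h\|^2)$ and $\mathcal{J}_{\mathcal{H}_h^{-1}} = I - \nabla h + O(\|h\|^2)$, so the derivative $\dot B(u,v)[h]$ is an explicit bounded sesquilinear form whose coefficients are linear in $\nabla h$. Differentiating \eqref{eq5.4}, and using that the right-hand side $(g,v)_\Gamma$ is independent of $h$, gives
\[
B(\dot{u}, v) = -\dot B(u,v)[h], \qquad \forall v \in V,
\]
where $\dot u \in V$ is the derivative at $0$ of $h \mapsto \tilde u_h$ (its existence follows from the implicit function theorem applied to \eqref{eq5.4}, since $B$ is an isomorphism $V \to V'$ by Theorem 3.1). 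Note $\dot u$ vanishes on $S$ because every $\tilde u_h$ does.

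The next step is to relate $\dot u$ to the object $\mathcal{U}'(0)h$ and to the claimed solution $u^*$. Since $u_h|_\Gamma = \tilde u_h|_\Gamma$ (the change of variables is the identity on $\Gamma$, as $h|_\Gamma = 0$), we get $\mathcal{U}'(0)h = \dot u|_\Gamma$. The heart of the argument is then to show that $\dot u$ and $u^*$ differ by a function supported off $S$ — more precisely, that $w := \dot u - u^*$ solves the \emph{homogeneous} Dirichlet problem (homogeneous Navier equation in $D$, homogeneous TBC on $\Gamma$, and $w=0$ on $S$), so that $w=0$ by the uniqueness in Theorem 3.1 and hence $\dot u|_\Gamma = u^*|_\Gamma$. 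Both $\dot u$ and $u^*$ satisfy $(\Delta^*+\omega^2)(\cdot)=0$ in $D$ and the homogeneous TBC $T(\cdot) = \mathcal{T}(\cdot)$ on $\Gamma$ — for $\dot u$ this is encoded in the equation $B(\dot u, v) = -\dot B(u,v)[h]$ together with the fact that $\dot B(u,v)[h]$, after integration by parts, localizes to a boundary term on $S$ only (the interior contributions must cancel; this is the standard shape-derivative computation where the bulk terms reorganize into a divergence). So the whole matter reduces to identifying the trace of $\dot u$ on $S$.

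To compute $\dot u|_S$, I would use the Hadamard-type identity: since $\tilde u_h = u_h \circ \mathcal{H}_h$ and $u_h = 0$ on $S_h = \mathcal{H}_h(S)$, differentiating the relation $u_h(x + h(x)) = \tilde u_h(x) = 0$ along $S$ and subtracting off the tangential part shows that on $S$ the "material derivative" $\dot u$ and the "shape derivative" $u'$ (the derivative of $u_h$ itself with frozen argument) are related by $\dot u = u' + (h\cdot\nabla)u$, while $u' = -(n\cdot h)\partial_n u$ on $S$ because $u_h$ vanishes on the moving boundary and $\nabla_{S} u = 0$ there (the tangential gradient of the zero trace vanishes, so only the normal derivative survives, weighted by the normal displacement $n\cdot h$). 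Since $\dot u$ is the material derivative, which is what the transformed equation produces directly, I would instead verify directly from the variational identity that $B(\dot u - u^*, v) = 0$ for all $v \in V$: subtract the weak form of \eqref{eq5.5}--\eqref{eq5.7} for $u^*$ from that of $\dot u$, integrate the shape-derivative term $\dot B(u,v)[h]$ by parts, and match it against the boundary integral $\int_S (n\cdot h)\partial_n u \cdot \overline{\partial_n v}\,ds$ coming from $u^*$.

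The main obstacle is precisely this last matching: one must carry out the integration-by-parts on $\dot B(u,v)[h]$ carefully, using the Navier equation for $u$ and the boundary conditions $u|_S = 0$, $v|_S = 0$, to show all volume terms cancel and the surviving surface term on $S$ is exactly the one generated by $u^*$ through \eqref{eq5.7}. This computation requires the regularity $S \in C^2$ and $u \in H^2$ near $S$ (so that $\partial_n u$ makes sense as an $L^2(S)$ function and $\mathcal{T}u$, $\mathcal{T}\dot u$ are well-defined), and it is where the Lam\'e structure of $\mathcal{E}(\cdot,\cdot)$ in \eqref{eq2.18} and the precise form of the operator $T$ enter; everything else is bookkeeping of the kind standard in domain-derivative arguments.
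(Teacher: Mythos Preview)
Your overall strategy (pull back to the fixed domain, differentiate the transformed sesquilinear form, identify the limit) is the same as the paper's, and your Jacobian expansions and the appeal to the invertibility of $B$ are fine. But there is a genuine gap in the identification step: you conflate the material derivative $\dot u$ with the shape derivative $u'$, and this makes two of your central claims false.

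First, $\dot u$ does \emph{not} satisfy $(\Delta^*+\omega^2)\dot u=0$ in $D$. The equation $B(\dot u,v)=-\dot B(u,v)[h]$ has a nonzero right-hand side consisting of volume integrals involving $\nabla h$; after integration by parts these do \emph{not} localize to $S$ but rather equal $B(w_h,v)$ with $w_h:=h\cdot\nabla u$ (this is exactly the computation the paper carries out in \eqref{eq5.21}--\eqref{eq5.32}). So $(\Delta^*+\omega^2)\dot u=(\Delta^*+\omega^2)w_h\neq 0$. Second, $w:=\dot u-u^*$ is \emph{not} in $V$ and does \emph{not} solve the homogeneous problem: since $\dot u|_S=0$ while $u^*|_S=-(n\cdot h)\partial_n u\neq 0$, you cannot invoke uniqueness for $w$. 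For the same reason your fallback target $B(\dot u-u^*,v)=0$ is wrong; in fact $B(\dot u-u^*,v)=B(w_h,v)\neq 0$, and there is no boundary term $\int_S (n\cdot h)\partial_n u\cdot\overline{\partial_n v}\,ds$ to match against because $v|_S=0$ kills it.

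The fix is to insert $w_h$ explicitly: on $S$ one has $w_h=(h\cdot n)\partial_n u$ (since $u|_S=0$ forces $\nabla u=(\partial_n u)\,n$ there), so $u^*+w_h\in V$ and $(u^*+w_h)|_\Gamma=u^*|_\Gamma$. The correct identity is $B(\dot u-(u^*+w_h),v)=0$ for all $v\in V$, obtained by combining $B(\dot u,v)=B(w_h,v)$ with $B(u^*,v)=0$; then uniqueness gives $\dot u=u^*+w_h$, hence $\dot u|_\Gamma=u^*|_\Gamma$. This is precisely what the paper does: it works directly with $\tilde u_h-u-u^*-w_h\in V$ and shows its $H^1$ norm is $o(\|h\|_{1,\infty})$.
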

\begin{proof}
    Let $w_h=h \cdot \nabla u$. Then $h|_\Gamma=0$ implies $w_h|_\Gamma=0$. So $w_h+u^* \in V$ and $(w_h+u^*)|_\Gamma=u^*|_\Gamma$. By trace theorem, \begin{equation}\label{eq5.8} \|u_h-u-u^*\|_{H^{1/2}(\Gamma)^2} \le C\|\tilde{u}_h-u-u^*-w_h\|_{H^1(D)^2}
    \end{equation} with the constant $C>0$. For convenience, we denote $\|h\|_{1,\infty}=\|h\|_{H^{1,\infty}(D)^2}$. Then combining \eqref{eq5.3} and \eqref{eq5.8} yields we only need to prove\[
    \lim_{\|h\|_{1,\infty}\to 0} \frac{\|\tilde{u}_h-u-u^*-w_h\|_{H^1(D)^2}}{\|h\|_{1,\infty}}=0.
    \] For any $v \in V$,\[
    B(\tilde{u}_h-u-u^*-w_h,\phi)=B(\tilde{u}_h-u,v)-B(u^*+u,v).
    \] Firstly consider\[
    B(\tilde{u}_h-u,v)=B(\tilde{u}_h,v)-(g,v)_\Gamma=B(\tilde{u}_h,v)-B^h(\tilde{u}_h,v)=\sum_{i=1}^3B_i(\tilde{u}_h,v),
    \] where\begin{align}\label{eq5.9}
        B_1(\tilde{u}_h,v)&=\mu\int_{D} \sum_{j=1}^2\nabla \tilde{u}_{h,j}(I_2-\mathcal{J}_{\mathcal{H}^{-1}_h}\mathcal{J}_{\mathcal{H}^{-1}_h}^\top\det{\mathcal{J}_{\mathcal{H}_h}})\nabla \bar{v}_j\,\text{d}x,\\ \label{eq5.10}
        B_2(\tilde{u}_h,v)&=(\lambda+\mu)\int_{D} (\nabla\cdot \tilde{u}_h)(\nabla\cdot \bar{v})-(\nabla\tilde{u}_h:\mathcal{J}_{\mathcal{H}^{-1}_h})(\nabla\bar{{v}}:\mathcal{J}_{\mathcal{H}^{-1}_h}^\top)\det{\mathcal{J}_{\mathcal{H}_h}}\,\text{d}x,\\ \label{eq5.11}
        B_3(\tilde{u}_h,v)&=\omega^2\int_{D}\tilde{u}_h\cdot\bar{v}(\det{\mathcal{J}_{\mathcal{H}_h}}-1)\,\text{d}x.
    \end{align} Consider the Jacobi  matrix
     \begin{equation*}
        \mathcal{J}_{\mathcal{H}_h}=I_2+\nabla h.
    \end{equation*}
    Direct calculation implies \begin{equation}
       \label{eq5.12} \det{\mathcal{J}_{\mathcal{H}_h}}=1+
        \nabla \cdot h + O(\|h\|^2_{1,\infty}\|v\|_{H^1(D)^2})
    \end{equation}
    and \begin{equation} \label{eq5.13}
        \mathcal{J}_{\mathcal{H}^{-1}_h}=I_2-\nabla h+O(\|h\|^2_{1,\infty}\|v\|_{H^1(D)^2}).
    \end{equation}
    Combining \eqref{eq5.12} and \eqref{eq5.13} gives \begin{equation}\label{eq5.14}
\mathcal{J}^\top_{\mathcal{H}^{-1}_h}\mathcal{J}_{\mathcal{H}^{-1}_h}\det{\mathcal{J}_{\mathcal{H}_h}}=I_2-(\nabla h+\nabla h^\top)-(\nabla \cdot h )I_2 +O(\|h\|^2_{1,\infty}\|v\|_{H^1(D)^2}).
    \end{equation}
    Insert \eqref{eq5.12}-\eqref{eq5.14} into \eqref{eq5.9}-\eqref{eq5.11},
    \begin{align} \label{eq5.15}
       \sum_{i=1}^3 B_i(\tilde{u}_h,v) =  
        \sum_{i=1}^3 g_i(h)(\tilde{u}_h,v)+O(\|h\|^2_{1,\infty}\|v\|_{H^1(D)^2}),
    \end{align} where 
    \begin{align} \label{eq5.16}
        g_1(h)(\tilde{u}_h,v)&=\mu\int_D \sum_{j=1}^2\nabla \tilde{u}_{h,j}(\nabla h+\nabla h^\top -(\nabla \cdot h) I_2)\nabla \bar{v}_j\,\mathrm{d}x,\\ \label{eq5.17}
        g_2(h)(\tilde{u}_h,v)&=(\lambda+\mu)\int_D(\nabla \cdot \tilde{u}_h)(\nabla \bar{v} : \nabla h^\top)+(\nabla \cdot \bar{v})(\nabla \tilde{u}_h : \nabla h^\top)\notag\\&-(\nabla \cdot h)(\nabla \cdot \bar{v})(\nabla \cdot \tilde{u}_h)\,\mathrm{d}x, \\ \label{eq5.18}
        g_3(h)(\tilde{u}_h,v)&=\omega^2\int_D(\tilde{u}_h\cdot\bar{v})(\nabla\cdot h)\,\mathrm{d}x.
    \end{align}
    It is easy to verify \begin{equation} \label{eq5.19}
    |g_i(h)(\tilde{u}_h,v)-g_i(h)(u,v)|=O(\|h\|_{1,\infty}\|v\|_{H^1(D)^2}\|\tilde{u}_h-u\|_{H^1(D)^2}).
    \end{equation}
    Insert \eqref{eq5.19} into \eqref{eq5.15}-\eqref{eq5.18},
    \begin{align} \label{eq5.20}
     B(\tilde{u}_h-u,v)&=
     g_1(h)(u,v)+g_2(h)(u,v)+g_3(h)(u,v)+ \notag\\ &O(\|h\|^2_{1,\infty}\|v\|_{H^1(D)^2})+O(\|h\|_{1,\infty}\|v\|_{H^1(D)^2}\|\tilde{u}_h-u\|_{H^1(D)^2})
    \end{align}
    For $g_1(h)(u,v)$, applying the identity 
    \begin{align*}
    \nabla u(\nabla h+\nabla h^\top-(\nabla \cdot h)I_2)\nabla v =& \nabla \cdot \{h \cdot (\nabla u) \nabla v + (h\cdot \nabla v) \nabla u-(\nabla u\cdot \nabla v)h\} \\ &-(h\cdot \nabla v)\Delta u- (h \cdot \nabla u)\Delta v
    \end{align*} and divergence theorem gives \begin{align} \label{eq5.21}
        g_1(h)(u,v)&=-\mu \sum_{j=1}^2 \Big\{\int_D (h \cdot \nabla u_j)\Delta \bar{v}_j+(h \cdot \nabla \bar{v}_j)\Delta {u}_j \,\mathrm{d} x\notag \\
        &- \int_S(h\cdot \nabla u_j)(n \cdot \nabla \bar{v}_j)+(h\cdot \nabla \bar{v}_j)(n \cdot \nabla {u}_j)-(h\cdot n)(\nabla \bar{v}_j\cdot \nabla u_j) \, \mathrm{d}s\Big\}. 
    \end{align}
    Considering $(\Delta^*+\omega^2)u=0$, integration by parts gives \begin{align} \label{eq5.22}
     \sum_{j=1}^2 & \mu \int_D  (h \cdot \nabla \bar{v}_j)\Delta {u}_j \,\mathrm{d}x  = (\lambda+\mu)\int_D (\nabla \cdot u)\nabla \cdot (h \cdot \nabla \bar{v}) \,\mathrm{d}x \notag \\ &-(\lambda+\mu)\int_S(\nabla \cdot  u)(n \cdot (h\cdot \nabla \bar{v}))\,\mathrm{d}s-\omega^2 \int_D (h \cdot \nabla \bar{v}) \cdot u \,\mathrm{d}x.
    \end{align}
    Integration by parts again gives \begin{align} \label{eq5.23}
    \sum_{j=1}^2 \mu & \int_D (h \cdot \nabla u_j)\Delta \bar{v}_j \,\mathrm{d}x = \notag \\ &-\mu \int_{D} \nabla (h \cdot \nabla u) : \nabla \bar{v} \,\mathrm{d}x + \mu \int_S (h \cdot \nabla u) \cdot (n \nabla \bar{v}) \,\mathrm{d}s.
    \end{align}
    Denote $n=(n_1, n_2)^\top$ and $\tau=(-n_2, n_1)^\top $. Notice $v|_S=0$ implies $\partial_\tau v|_S=0$. This turns out that \begin{equation} \label{eq5.24}
        \sum_{i=1}^2 \int_S (h \cdot \nabla \bar{v})(n \cdot \nabla u_j)-(h \cdot n) (\nabla u_j \cdot \nabla \bar{v}_j) \,\mathrm{d}s =0.
    \end{equation}
    Applying the divergence theorem again and noticing $h|_\Gamma=0$, $v|_S=0$, we obtain \begin{align} \label{eq5.25}
        \int_D(\nabla \cdot h)(u\cdot \bar{v})+u \cdot (h \cdot \nabla \bar{v}) \,\mathrm{d}x &= \int_D \nabla \cdot((u \cdot \bar{v}) h)-(h \cdot \nabla u) \cdot \bar{v} \, \mathrm{d}x \notag \\ &=-\int_D(h \cdot \nabla u) \cdot \bar{v} \, \mathrm{d}x.
    \end{align}
    Inserting \eqref{eq5.21}-\eqref{eq5.25} into \eqref{eq5.16} and \eqref{eq5.18}  yields \begin{align} \label{eq5.26}
        g_1(h)(u,v) & +g_3(h)(u,v)= \notag \\ & \mu \int_D \nabla(h \cdot \nabla u):\nabla \bar{v}\,\mathrm{d}x-(\lambda+\mu)\int_D(\nabla \cdot u) \nabla \cdot (h \cdot \nabla \bar{v}) \mathrm{d} x \notag \\ & -\omega^2 \int_D (h \cdot \nabla u) \cdot \bar{v}\,\mathrm{d}x +(\lambda+\mu)\int_S (\nabla \cdot u)(n \cdot ( h \cdot \nabla \bar{v}) )\mathrm{d} s.
    \end{align}
    For $g_2(h)(u,v)$, direct calculation gives \begin{align} \label{eq5.27}
        \int_D ( \nabla \cdot u)( \nabla \bar{v} : \nabla h^\top) \,\mathrm{d}x=
        \int_D (\nabla \cdot u) \nabla  \cdot (h \cdot \nabla \bar{v})- (\nabla \cdot u)( h \cdot ( \nabla \cdot \nabla \bar{v}^\top))  \,\mathrm{d}x
    \end{align}
    and similarly \begin{align} \label{eq5.28}
        \int_D ( \nabla \cdot \bar{v})( \nabla u : \nabla h^\top) \,\mathrm{d}x=
        \int_D (\nabla \cdot \bar{v}) \nabla  \cdot (h \cdot \nabla u)- (\nabla \cdot \bar{v})( h \cdot ( \nabla \cdot \nabla u^\top))  \,\mathrm{d}x.
    \end{align}
    Then applying divergence theorem yields \begin{align} \label{eq5.29}
        &\int_D(\nabla \cdot \bar{v})( h \cdot ( \nabla \cdot \nabla u^\top))  \,\mathrm{d}x + \int_D (\nabla \cdot u)( h \cdot ( \nabla \cdot \nabla \bar{v}^\top))  \,\mathrm{d}x  \notag \\ &+\int_D (\nabla \cdot u)(\nabla \cdot \bar{v})(\nabla \cdot h) \,\mathrm{d}x = \int_D \nabla \cdot ( h ( \nabla \cdot u)(\nabla \cdot \bar{v})) \,\mathrm{d}x \notag \\ &= \int_S  ( h \cdot n )( \nabla \cdot u)(\nabla \cdot \bar{v}) \,\mathrm{d}s.
    \end{align}
    Recalling $\partial_\tau v|_S=0$, we have \begin{align} \label{eq5.30}
        \int_S  ( h \cdot n )( \nabla \cdot u)(\nabla \cdot \bar{v}) \,\mathrm{d}s= \int_S (\nabla \cdot u)( n \cdot ( h \cdot  \nabla \bar{v})) \mathrm{d}s.
    \end{align}
    Together with \eqref{eq5.27}-\eqref{eq5.30} implies \begin{align} \label{eq5.31}
        g_2(h)(u,v)&=(\lambda+\mu)\int_D (\nabla \cdot u) \nabla \cdot ( h \cdot \nabla \bar{v})+\nabla \cdot (h \cdot \nabla u)(\nabla \cdot \bar{v})\, \mathrm{d}x\notag \\&- (\lambda+\mu) \int_S( \nabla \cdot u)( n \cdot (h \cdot  \nabla \bar{v})) \,\mathrm{d}s.
    \end{align}
    Now recalling $w_h= h \cdot \nabla u$, $w_h|_\Gamma=0$ 
and combining the representations of $g_i(h)(u,v)$ in \eqref{eq5.26} and \eqref{eq5.31} implies 
\begin{align} \label{eq5.32}
        &\sum_{i=1}^3 g_i(h)(u,v)=\notag \\ &\mu \int_D \nabla (h \cdot \nabla u) : \nabla \bar{v} \,\mathrm{d}x +(\lambda+\mu)\int_D \nabla \cdot (h \cdot \nabla u) \cdot \bar{v}  \,\mathrm{d}x -\omega^2 \int_D (h \cdot \nabla u) \cdot \bar{v} \,\mathrm{d}x \notag \\ &= B(w_h,v).
    \end{align} 
    Notice $u^*$ satisfies \eqref{eq5.5}-\eqref{eq5.6}, so it satisfies the variation formula $B(u^*,v)=0$ for any $v \in V$. Hence by \eqref{eq5.20} and \eqref{eq5.32} we can obtain 
    \begin{equation} \label{eq5.33}
       \frac{B(\tilde{u}_h-u-u^*-w_h,v)}{\|h\|_{1,\infty}\|v\|_{H^1(D)^2}} = O(\|h\|_{1,\infty})+O(\|\tilde{u}_h-u\|_{H^1(D)^2}).
    \end{equation}
     The Dirichlet boundary condition \eqref{eq5.7} and $u, \tilde{u}_h \in V$  deduces $(\tilde{u}_h-u-u^*-w_h)|_S = 0$ so that $\tilde{u}_h-u-u^*-w_h \in V$.
     The using Theorem 3.1 we know the sesquilinear form $B$ generates an invertible bounded linear operator which is still denoted by $B$. By Banach open map theorem, $B^{-1}$ : $V^* \to V$ is also a bounded linear operator which implies \begin{equation} \label{eq5.34}
         \|\tilde{u}_h-u-u^*-w_h\|_{H^1(D)^2} \le C \sup_{v \in V, v\neq 0} \frac{|B(\tilde{u}_h-u-u^*-w_h,v)|}{\|v\|_{H^1(D)^2}}.
     \end{equation}
     Then combining \eqref{eq5.33}-\eqref{eq5.34} gives \[
     \frac{\|\tilde{u}_h-u-u^*-w_h\|_{H^1(D)^2}}{\|h\|_{1,\infty}}=O(\|h\|_{1,\infty})+O(\|\tilde{u}_h-u\|_{H^1(D)^2}).
     \] Hence we only need to prove when $\|h\|_{1,\infty} \to 0$, $\|\tilde{u}_h-u-u^*-w_h\|_{H^1(D)^2}\to 0$.
     Since $B^h(u,v)-B(u,v)=B_1(u,v)+B_2(u,v)+B_3(u,v)$. 
     Inserting \eqref{eq5.12}-\eqref{eq5.14} gives \[
     |B^h(u,v)-B(u,v)| \le C \|h\|_{1, \infty} \|u\|_{H^1(D)^2}\|v\|_{H^1(D)^2}.
     \] Since \[
     B^h(\tilde{u}_h,v)=(g,v)_\Gamma=B(u,v),
     \] we obtain\[
     \|\tilde{u}_h-u\|_{H^1(D)^2} \le C \sup_{u \in V, u \not= 0}\sup_{v \in V, v \not= 0} \frac{ |B^h(u,v)-B(u,v)|}{\|v\|_{H^1(D)^2}\|u\|_{H^1(D)^2}} \le C\|h\|_{1,\infty} \to 0.
     \]
     Hence  \[
     \lim_{\|h\|_{1,\infty}}  \frac{\|\tilde{u}_h-u-u^*-w_h\|_{H^1(D)^2}}{\|h\|_{1,\infty}}=0,
     \]which completes the proof.
\end{proof}

Next we consider a local stability result for inverse cavity problem. For two domains $D_1,\, D_2$ in $\mathbb{R}^2$, define the Hausdorff distance between $D_1$ and $D_2$ by \[
dist(D_1, D_2)=\max \{ \rho (D_1, D_2),\, \rho (D_2, D_1)\},
\] where \[\rho(D_1,D_2)=\sup_{x \in D_1}\inf_{y \in D_2} |x-y|.\] Take $h(x)=k p(x) n$, where $k \in \mathbb{R}$ and $p(x) \in C^2(S, \mathbb{R})$. It is easy to verify $dist (D_1,D_2)= O(h)$. Our goal is to give the following local stability result.
\begin{Th}
    Given $p \in C^2(S , \mathbb{R})$ and $k>0$ is sufficiently small, then\[
    dist (D_k,D) \le C\|u_k-u\|_{H^{1/2}(\Gamma)^2},
    \] where $D_k=D_{kpn}$, $u_k=u_{kpn}$ and $C>0$ is a constant independent with $k$.
\end{Th}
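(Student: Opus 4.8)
The plan is to read the stated estimate as a lower bound for the linearization of the forward map $\mathcal U$ along the direction $pn$, and to obtain it from the first-order expansion furnished by Theorem 5.1 combined with the elementary bound $\mathrm{dist}(D_k,D)=O(k)$ recorded just before the statement. First I would dispense with the trivial case $p\equiv0$ (then $D_k=D$ and $u_k=u$), and note that we take $p$ vanishing on $S\cap\Gamma$ and $S$ smooth enough that $pn\in C^{2}(S,\mathbb R)^{2}$, so that $h:=kpn$ is admissible in Theorem 5.1. Since $\mathcal U$ is Fréchet differentiable at $0$ with respect to $\|\cdot\|_{H^{1,\infty}(S)^{2}}$ and $\|kpn\|_{H^{1,\infty}(S)^{2}}=k\,\|pn\|_{H^{1,\infty}(S)^{2}}$, as $k\to0^{+}$ one has
\[
u_k-u=\mathcal U(kpn)-\mathcal U(0)=k\,\mathcal U'(0)(pn)+o(k)\qquad\text{in }H^{1/2}(\Gamma)^{2},
\]
and Theorem 5.1 identifies $\mathcal U'(0)(pn)=u^{*}|_{\Gamma}$, where $u^{*}$ solves \eqref{eq5.5}--\eqref{eq5.7} with $h$ replaced by $pn$, i.e. $u^{*}=-p\,\partial_n u$ on $S$. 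By the reverse triangle inequality, $\|u_k-u\|_{H^{1/2}(\Gamma)^{2}}\ge k\,\|u^{*}|_{\Gamma}\|_{H^{1/2}(\Gamma)^{2}}-o(k)$. Hence, with $c_p:=\|u^{*}|_{\Gamma}\|_{H^{1/2}(\Gamma)^{2}}$, everything reduces to proving $c_p>0$: once this is known, for $k$ small enough that the $o(k)$ term is below $\tfrac{c_p}{2}k$ we get $k\le\tfrac{2}{c_p}\|u_k-u\|_{H^{1/2}(\Gamma)^{2}}$, and combined with $\mathrm{dist}(D_k,D)\le C_0k$ this yields the claim with $C=2C_0/c_p$ independent of $k$.

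The key point, which I expect to be the main obstacle, is the non-degeneracy $c_p>0$; I would establish it by contradiction. If $u^{*}|_{\Gamma}=0$, then also $Tu^{*}=\mathcal T(u^{*}|_{\Gamma})=0$ on $\Gamma$, so the full Cauchy data of $u^{*}$ vanish on the relatively open portion $\Gamma\subset\partial D$; since $\Delta^{*}+\omega^{2}$ has constant coefficients, Holmgren's uniqueness theorem together with unique continuation for the Navier system (exactly as at the end of the proof of Theorem 3.1, or after the Helmholtz decomposition \eqref{eq2.4}--\eqref{eq2.6}) forces $u^{*}\equiv0$ in $D$. Then \eqref{eq5.7} gives $p\,\partial_n u=0$ on $S$. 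Next I would show $\partial_n u$ cannot vanish on any nonempty relatively open $\Sigma\subset S$: there $u=0$ by the Dirichlet condition, hence $\mathrm{div}\,u=n\cdot\partial_n u=0$ and $Tu=\mu\partial_n u+(\lambda+\mu)n\,\mathrm{div}\,u=0$, so both Cauchy data of $u$ vanish on $\Sigma$, and unique continuation forces $u\equiv0$ in $D$ --- whence $g=Tu-\mathcal T u\equiv0$ on $\Gamma$, impossible since the incident data are nontrivial ($g=T(u_i+u_r)\not\equiv0$ on $\Gamma$ because $u_i+u_r$ vanishes on $\{x_2=0\}$ but not identically in $\{x_2>0\}$). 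Therefore $\{\partial_n u=0\}\cap S$ has empty interior in $S$, while $\{p\neq0\}\cap S$ is a nonempty open subset of $S$ on which $p\,\partial_n u=0$ forces $\partial_n u=0$ --- a contradiction. Hence $c_p>0$, closing the argument.

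The hard part is thus not any computation but this non-degeneracy: turning a first-order expansion into a genuine two-sided comparison needs the linearized solution operator not to annihilate $pn$, and that in turn rests on the unique continuation property of the elastic system --- used once to pass from vanishing Cauchy data on $\Gamma$ to $u^{*}\equiv0$, and once more to exclude $\partial_n u$ vanishing on an open piece of $S$ --- together with the nontriviality of the incident field. A secondary, unavoidable feature is that $C=2C_0/c_p$ depends on $p$, so the estimate holds only along the fixed normal perturbation profile $pn$ and for $k$ small, which is exactly what ``local'' means in the statement; one should also keep track of the mild regularity bookkeeping ($pn\in C^{2}$ and $p|_{S\cap\Gamma}=0$) needed to apply Theorem 5.1.
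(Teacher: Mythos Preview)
Your proposal is correct and follows essentially the same approach as the paper: both arguments hinge on showing that the linearized trace $u^{*}|_{\Gamma}$ is nonzero by using unique continuation twice (first from vanishing Cauchy data on $\Gamma$ to conclude $u^{*}\equiv0$, then from $\partial_n u=0$ on an open arc of $S$ together with the Dirichlet condition to force $u\equiv0$), reaching a contradiction with the nontriviality of $g$. The only cosmetic difference is organizational: you first isolate and prove the non-degeneracy $c_p>0$ and then read off the estimate from the Taylor expansion, whereas the paper argues by contradiction on the estimate itself along a subsequence and then arrives at $v^{*}|_{\Gamma}=0$; the substance is identical.
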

\begin{proof} For $p(x) \equiv 0$, it is obvious this conclusion is true. So we can assume $p \not\equiv 0$.

    Assume this conclusion is not true. There exists a $p(x) \in C^2(S , \mathbb{R}) $ such that \begin{equation} \label{eq5.35}
       \left\|\frac{u_{k'}-u}{k'}\right\|_{H^{1/2}(\Gamma)^2} \to 0, \quad \text{when} \quad k' \to 0,
    \end{equation} where $\{u_{k'}\}$ is some subsequence of $\{u_k\}$. Applying Theorem 5.1 to $h(x)= k p(x) n$ gives  \begin{equation} \label{eq5.36}
    \lim_{k \to 0} \left\|\frac{u_k-u-u^*}{k}\right\|_{H^{1/2}(\Gamma)^2}=0,
    \end{equation} where $u^*$ satisfies \begin{align*}
        (\Delta^*+\omega^2)u^*=0, \quad &{\rm in} \quad D, \\
        Tu^*=\mathcal{T} u^*, \quad &{\rm on}\quad \Gamma,  \\
        u^*=- hp \partial_n u, \quad &{\rm on} \quad S.
    \end{align*}  Let $v^*=u^*/h$, then $v^*$ satisfies \begin{align*}
        (\Delta^*+\omega^2)v^*=0, \quad &{\rm in} \quad D,  \\
        Tv^*=\mathcal{T} v^*, \quad &{\rm on}\quad \Gamma,  \\
        v^*=- p \partial_n u, \quad &{\rm on} \quad S.
    \end{align*}  Then by \eqref{eq5.36} we obtain \begin{equation} \label{eq5.37} \lim_{h \to 0} \left\|\frac{u_h-u}{h}-v^*\right\|_{H^{1/2}(\Gamma)^2}=0. \end{equation} Combining \eqref{eq5.35} and \eqref{eq5.37} yields $\|v^*\|_{H^{1/2}(\Gamma)^2}=0$. Hence \[
    Tv^*=\mathcal{T}v^*=0, \quad \text{on}\quad \Gamma.
    \] By unique continuation, \[v^*=0,\quad \text{in}\quad D.\]  Recall \[
    v^*=- p \partial_n u, \quad {\rm on} \quad S,
    \]  $p \not\equiv 0$ and $p \in C^2(S,\mathbb{R})$, there must exist 
 a continuous part $S' \subset S $ such that \[
    p \not= 0, \quad \text{on} \quad S'
    \] which implies \[
   \partial_n u = 0, \quad \text{on} \quad S'
    .\] Then by unique continuation $u=0$ in $D$ which is a contradiction to $Tu=\mathcal{T}u+g$ on $\Gamma$.
\end{proof}
\subsection{Neumann case}
Next consider the Neumann problem. Notice that the boundary value problem deduced by the NtD operator has the inhomogeneous boundary condition (2.20), so it is difficult to directly calculate the Fr$\acute{\rm e}$chet derivative like Dirichlet problem. Here, we still take the advantage of the TBC given by a DtN operator, though its definition on $\Gamma$ is difficult. Instead, a upper half-circle artificial boundary is taken, see in Figure \ref{fig:my_label2}. 
 \begin{figure}
    \centering
    \includegraphics{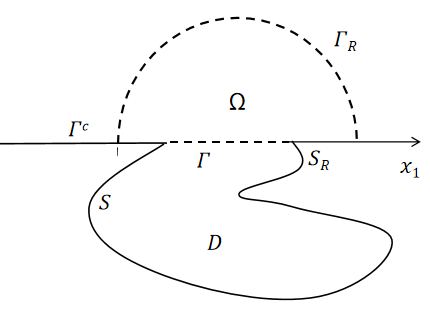}
    \caption{The Neumann case}
    \label{fig:my_label2}
\end{figure}

Take $R >0$ such that $\{x_2=0,|x|>R\}\subset \Gamma^c $ in  Figure \ref{fig:my_label2}. Denote $
 \Omega=D \cup \Gamma \{0<x_2,|x|<R\}
$, $\Gamma_R=\{x_2\ge 0,|x|=R\}$ and $S_R=S \cup (\Gamma^c \cap \{|x| \le R\})$. 
Let $x \in \mathbb{R}\cap \{ |x|>R\}$.

 Recalling the scattering wave $u_s$ of Neumann problem satisfies $Tu_s|_{\Gamma^c}=0$ and the Green's representation theorem yields \[
u_s= \int_{\Gamma_R} T_y G_N(x,y) \cdot u_s(y) - G_N(x,y) T_y u_s(y) \,\mathrm{d} s(y),
\] where $T_y$ is the differential operator $T$ with respect to $y$. Taking $x \to \Gamma_R$ gives \[
\frac{1}{2}u_s - \int_{\Gamma_R} T_y G_N(x,y) \cdot u_s(y) + G_N(x,y) \cdot  T_y u_s(y)=0.
\] Define the single-layer operator $\mathcal{S}$ by \[
\mathcal{S}v(x)=\int_{\Gamma} G_N(x,y) \cdot v(x) \,\mathrm{d}s(y), \quad  v \in H^{1/2}(\Gamma_R)
\] and double-layer operator $\mathcal{D}$ by\[
\mathcal{D}v(x)=\int_{\Gamma_R} T_yG_N(x,y) f(y) \,\mathrm{d}s(y), \quad  v \in H^{-1/2}(\Gamma_R).
\]
 We can choose $R$ such that $\omega^2$ 
 is not the interior Dirichlet eigenvalue of $\Delta^*$ so that $\mathcal{S}$ is invertible. Then define the DtN operator \[
 \mathscr{T} := -\mathcal{S}^{-1}(\frac{1}{2}\mathcal{I}-\mathcal{D}),
 \] where $\mathcal{I}$ is the identity operator. It is easy to verify this DtN operator implies the TBC \[
 Tu=\mathscr{T}u+g, \quad \text{on} \quad \Gamma_R
 \] with $g=T(u_i+u_r)-\mathscr(u_i+u_r)$.

  Then rewrite the boundary value problem as \begin{align*}
    (\Delta^*+\omega^2)u=0, \quad  &\text{in} \quad \Omega, \\ 
    Tu=0, \quad &\text{on} \quad  S_R, \\
    Tu=\mathscr{T} u+g, \quad &\text{on} \quad \Gamma_{c}.
\end{align*}
 The variation formula is given by \begin{equation} \label{eq5.38}
\mathscr{B}(u,v)=(g,v)_{\Gamma_R},\quad \forall v \in H^1(\Omega)^2,
\end{equation} where $B_{c}$ : $H^1(\Omega)^2 \times H^1(\Omega)^2 \to \mathbb{C}$ is defined by\[
\mathscr{B}(u,v)= \int_{\Omega} \mathcal{E}(u, \bar{v})-\omega^2 u \cdot \bar{v} \,\mathrm{d}x -\int_{\Gamma_R} \mathscr{T}u \cdot \bar{v} \,\mathrm{d}s.
\]
 The Dirichlet data on $\Gamma_R$ is directly given from the Neumann data on $\Gamma$ by \eqref{eq4.9}. Hence it is enough to consider the reconstruction from the Dirichlet data on $\Gamma_R$. 

 Let $\Omega_{h}= D_h \cup \Gamma \cup \{0<x_2, |x|<R\}$.
 Define the solution operator $\mathcal{U}_N$ : $C^2(S, \mathbb{R}^2) \to H^{1/2}(\Gamma_R)^2$ by \[
 \mathcal{U}_N (h)= u_h|_{\Gamma_R},
 \] where $u_h$ is solution to \begin{equation}\label{eq5.39}
 \mathscr{B}_h(u_h,v_h)=(g,v_h)_{\Gamma_R}, \quad \forall v_h \in H^1(\Omega_h)^2
 \end{equation} with \begin{equation} \label{eq5.40}
     \mathscr{B}_h(u_h,v_h)=\int_{\Omega_h} \mathcal{E}(u_h, \bar{v}_h)-\omega^2 u_h \cdot \bar{v}_h \,\mathrm{d}x -\int_{\Gamma_{R}} \mathcal{T}u_h \cdot \bar{v}_h \,\mathrm{d}s.
 \end{equation}
  Extend $h(x)$ to $C^2(\Omega,\mathbb{R}^2)$  and still denote it by $h$ such that  \[
h(x)=0, \quad \text{on} \quad \Gamma_{R},
\] and \begin{equation*} 
\|h\|_{H^{1,\infty}(\Omega)^2}\le C\|h\|_{H^{1,\infty}(S )^2}.
\end{equation*} with constant $C>0$. Define $\mathcal{H}_h$ by \[
\mathcal{H}_h(y)=y+h(y),\quad y \in \Omega.
\] It is invertible for sufficiently small $\|h\|_{H^{1,\infty}(\Omega)^2}$. 

Then taking the variable transform $x=\mathcal{H}_h(y)$ in \eqref{eq5.40} implies 
\begin{align*}
\mathscr{B}_h(u_h,v_h)=\mathscr{B}^h(\Tilde{u}_h, \Tilde{v}_h).\end{align*}
Here $\mathscr{B}^h$ is the similarly defined as $B^h$ in Section 5.1 except that $D_h$, $\Gamma$ and $\mathcal{T}$ are replaced by $\Omega_h$, $\Gamma_{R}$ and $\mathscr{T}$ respectively. Since $\Tilde{u}_h$, $\Tilde{v}_h \in H^1(\Omega)^2$, $\mathscr{B}^h$ is a sesquilinear form on $H^1(\Omega)^2 \times H^1(\Omega)^2$. So the variation formula \eqref{eq5.39} becomes
\begin{equation*} 
    \mathscr{B}^h( \Tilde{u}_h,v)=(g,v)_{\Gamma_R},\quad \forall v \in H^1(\Omega_h)^2.
\end{equation*} 
Let $\mathcal{U}_N'(0)$ : $C^2(S,\mathbb{R})^2 \to H^{1/2}(\Gamma_R)^2$ is the Fr$\acute{\rm e}$chet derivative of $\mathcal{U}_N$ at $0$. Then result similar as Theorem 5.1 holds for Neumann case.
\begin{Th}
   Suppose $h \in C^2(S, \mathbb{R})^2$ satisfying $h=0$ on $S \cap \Gamma$. Let $u^*$ is solution to the problem \begin{align} 
    \label{eq5.41}    (\Delta^*+\omega^2)u^*=0, \quad &{\rm in} \quad \Omega,  \\ 
       \label{eq5.42} Tu^*=\mathcal{T} u^*, \quad &{\rm on}\quad \Gamma_{R},  \\ 
       \label{eq5.43} Tu^*=\mathcal{A}, \quad &{\rm on} \quad S_R,
    \end{align}
    where $\mathcal{A}$ is a distribution defined on $H^{1/2}(S_R)^2$ by \begin{align*}
    \mathcal{A}(\bar{v})&= \omega^2 \int_{S_R} (u\cdot \bar{v})( h\cdot n) \,\mathrm{d}s \\ &-(\lambda+\mu)\int_{S_R}(\nabla \cdot u)(\nabla \cdot \bar{v})(n \cdot h)\,\mathrm{d}s - \mu \int_{S_R} (h\cdot n)(\nabla u :\nabla \bar{v}) \,\mathrm{d}s.
    \end{align*}
    Then $\mathcal{U}_N'(0)h=u^*|_{\Gamma_{c}}$.
\end{Th}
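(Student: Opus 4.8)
The plan is to follow the proof of Theorem~5.1 almost line for line. The structural differences are: (i) on the moving part of the boundary $u$ now obeys the natural condition $Tu=0$ rather than $u=0$; (ii) the ambient space is all of $H^1(\Omega)^2$, so there is no trace constraint to respect; and (iii) the transparent condition is the one induced by the artificial-boundary DtN operator $\mathscr{T}$ on $\Gamma_R$. First I would set $w_h=h\cdot\nabla u$. Since $h=0$ on $\Gamma_R$ we get $w_h|_{\Gamma_R}=0$ and $\mathcal{H}_h=\mathrm{id}$ on $\Gamma_R$, hence $u_h|_{\Gamma_R}=\tilde{u}_h|_{\Gamma_R}$ and $(u_h-u-u^*)|_{\Gamma_R}=(\tilde{u}_h-u-u^*-w_h)|_{\Gamma_R}$; by the trace theorem and the extension bound $\|h\|_{H^{1,\infty}(\Omega)^2}\le C\|h\|_{H^{1,\infty}(S)^2}$ it then suffices to prove $\|\tilde{u}_h-u-u^*-w_h\|_{H^1(\Omega)^2}/\|h\|_{1,\infty}\to 0$ as $\|h\|_{1,\infty}\to 0$. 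Along the way I would record three facts: $\tilde{u}_h-u-u^*-w_h\in H^1(\Omega)^2$ automatically; arguing as in Section~4, $\mathscr{B}$ satisfies a G{\aa}rding inequality and is injective, hence Fredholm of index zero and boundedly invertible on $H^1(\Omega)^2$ by the open mapping theorem; and a solution $u^*$ of \eqref{eq5.41}--\eqref{eq5.43} exists and, by Betti's formula, satisfies the weak identity $\mathscr{B}(u^*,v)=\mathcal{A}(\bar v)$ for every $v\in H^1(\Omega)^2$ (the $C^2$ regularity of $S$ together with interior elliptic regularity being what makes the boundary quantities in $\mathcal{A}$ meaningful and \eqref{eq5.43} legible in this weak sense).

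Next, for $v\in H^1(\Omega)^2$ I would split $\mathscr{B}(\tilde{u}_h-u-u^*-w_h,v)=\mathscr{B}(\tilde{u}_h-u,v)-\mathcal{A}(\bar v)-\mathscr{B}(w_h,v)$ and treat the first term exactly as in the derivation of \eqref{eq5.20}: the change of variables $x=\mathcal{H}_h(y)$ turns $\mathscr{B}^h(\tilde{u}_h,v)=(g,v)_{\Gamma_R}$ into $\mathscr{B}(u,v)$, so $\mathscr{B}(\tilde{u}_h-u,v)=\sum_{i=1}^3 B_i(\tilde{u}_h,v)$ with the $B_i$ of \eqref{eq5.9}--\eqref{eq5.11} now over $\Omega$; expanding the Jacobian through \eqref{eq5.12}--\eqref{eq5.14} and replacing $\tilde{u}_h$ by $u$ gives $\mathscr{B}(\tilde{u}_h-u,v)=\sum_{i=1}^3 g_i(h)(u,v)+O(\|h\|_{1,\infty}^2\|v\|_{H^1(\Omega)^2})+O(\|h\|_{1,\infty}\|v\|_{H^1(\Omega)^2}\|\tilde{u}_h-u\|_{H^1(\Omega)^2})$ with the $g_i$ of \eqref{eq5.16}--\eqref{eq5.18}. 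Everything then reduces to the single identity $\sum_{i=1}^3 g_i(h)(u,v)=\mathscr{B}(w_h,v)+\mathcal{A}(\bar v)$.

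Proving that identity is the heart of the matter, and I expect it to be the main obstacle; it is a Hadamard-type shape-derivative formula. I would rerun the divergence-theorem manipulations \eqref{eq5.21}--\eqref{eq5.32}, but now the surface integrals over $S_R$ can no longer be discarded: the steps \eqref{eq5.24}, \eqref{eq5.25} and \eqref{eq5.30} used $v|_S=0$ and $\partial_\tau v|_S=0$, which fail here. Instead I would decompose $\nabla v=(\partial_n v)\,n+(\partial_\tau v)\,\tau$ on $S_R$ and, wherever a normal derivative of $u$ appears, eliminate it via the natural condition $Tu=\mu\partial_n u+(\lambda+\mu)\,n\,\mathrm{div}\,u=0$ on $S_R$. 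The field $w_h=h\cdot\nabla u$ still vanishes on $\Gamma_R$, so the transparent term $-\int_{\Gamma_R}\mathscr{T}w_h\cdot\bar v$ in $\mathscr{B}(w_h,v)$ is zero and nothing changes there. After the cancellations the volume contributions should reassemble exactly into $\mathscr{B}(w_h,v)$, while the leftover integrals over $S_R$ should collapse to $-\int_{S_R}(h\cdot n)\big(\mathcal{E}(u,\bar v)-\omega^2 u\cdot\bar v\big)\,\mathrm{d}s$; since $\mathcal{E}(u,\bar v)=\mu(\nabla u:\nabla\bar v)+(\lambda+\mu)(\nabla\cdot u)(\nabla\cdot\bar v)$ by \eqref{eq2.18}, this is precisely $\mathcal{A}(\bar v)$. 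The delicate point is exactly this boundary bookkeeping — the normal/tangential splitting on $S_R$ and the repeated use of $Tu=0$; everything else is a transcription of Section~5.1.

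Granting the identity, I would conclude as in Section~5.1: $\mathscr{B}(\tilde{u}_h-u-u^*-w_h,v)=O(\|h\|_{1,\infty}^2\|v\|_{H^1(\Omega)^2})+O(\|h\|_{1,\infty}\|v\|_{H^1(\Omega)^2}\|\tilde{u}_h-u\|_{H^1(\Omega)^2})$, and the boundedness of $\mathscr{B}^{-1}$ turns this into $\|\tilde{u}_h-u-u^*-w_h\|_{H^1(\Omega)^2}=O(\|h\|_{1,\infty}^2)+O(\|h\|_{1,\infty}\|\tilde{u}_h-u\|_{H^1(\Omega)^2})$. A comparison of $\mathscr{B}^h$ with $\mathscr{B}$ through \eqref{eq5.12}--\eqref{eq5.14} gives $|\mathscr{B}^h(u,v)-\mathscr{B}(u,v)|\le C\|h\|_{1,\infty}\|u\|_{H^1(\Omega)^2}\|v\|_{H^1(\Omega)^2}$, and since $\mathscr{B}^h(\tilde{u}_h,v)=(g,v)_{\Gamma_R}=\mathscr{B}(u,v)$ this yields $\|\tilde{u}_h-u\|_{H^1(\Omega)^2}\le C\|h\|_{1,\infty}\to 0$. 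Plugging this back in gives $\|\tilde{u}_h-u-u^*-w_h\|_{H^1(\Omega)^2}=o(\|h\|_{1,\infty})$, hence $\mathcal{U}_N'(0)h=u^*|_{\Gamma_R}$, as claimed.
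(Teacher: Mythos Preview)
Your proposal is correct and follows essentially the same route as the paper: set $w_h=h\cdot\nabla u$, reduce to $\|\tilde u_h-u-u^*-w_h\|_{H^1(\Omega)^2}=o(\|h\|_{1,\infty})$, expand $\mathscr{B}(\tilde u_h-u,v)$ into the $g_i$ of \eqref{eq5.16}--\eqref{eq5.18} over $\Omega$, and prove the Hadamard identity $\sum_i g_i(h)(u,v)=\mathscr{B}(w_h,v)+\mathcal{A}(\bar v)$ by rerunning the divergence-theorem manipulations with the Neumann condition $\mu\partial_n u+(\lambda+\mu)n\,\mathrm{div}\,u=0$ on $S_R$ in place of $v|_S=0$. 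Your recognition that the surviving boundary terms are precisely $-\int_{S_R}(h\cdot n)\big(\mathcal{E}(u,\bar v)-\omega^2 u\cdot\bar v\big)\,\mathrm{d}s=\mathcal{A}(\bar v)$ matches the paper's computations \eqref{eq5.48}--\eqref{eq5.52}, and your closing continuity argument is the same one the paper tacitly invokes.
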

\begin{proof}
    Let $w_h=h \cdot \nabla u$. Then $h|_{\Gamma_R}=0$ implies $w_h|_{\Gamma_R}$. So $w_h+u^* \in H^1(\Omega)^2$ and $(w_h+u^*)|_{\Gamma_R}=u^*|_{\Gamma_R}$. Similarly as Theorem 5.1, we only need to prove\[
    \lim_{\|h\|_{1,\infty}\to 0} \frac{\|\tilde{u}_h-u-u^*-w_h\|_{H^1(\Omega)^2}}{\|h\|_{1,\infty}}=0,
    \]  where $\|\cdot\|_{1, \infty}=\|\cdot\|_{H^1(\Omega)^2}$.
    For any $v \in H^1(\Omega)^2$,\[
   \mathscr{B}(\tilde{u}_h-u-u^*-w_h,\phi)=\mathscr{B}(\tilde{u}_h-u,v)-\mathscr{B}(u^*+w_h,v).
    \] 
    Like Section 5.1, we have
    \begin{align} \label{eq5.44}
     \mathscr{B}(\tilde{u}_h-u,v)&=
     g_1(h)(u,v)+g_2(h)(u,v)+g_3(h)(u,v)+ \notag\\ &O(\|h\|^2_{1,\infty}\|v\|_{H^1(\Omega)^2})+O(\|h\|_{1,\infty}\|v\|_{H^1(\Omega)^2}\|\tilde{u}_h-u\|_{H^1(\Omega)^2}),
    \end{align} where $g_i(h)$ is defined the same as \eqref{eq5.16}-\eqref{eq5.18} except that $D$ is replaced by $\Omega$.
    
   For $g_1(h)(u,v)$, similarly as \eqref{eq5.21}, we obtain \begin{align} \label{eq5.45}
        g_1(h)(u,v)&=-\mu \sum_{j=1}^2 \{\int_{\Omega} (h \cdot \nabla u_j)\Delta \bar{v}_j+(h \cdot \nabla \bar{v}_j)\Delta {u}_j \,\mathrm{d} x\notag \\
        &- \int_{S_R}(h\cdot \nabla u_j)(n \cdot \nabla \bar{v}_j)+(h\cdot \nabla \bar{v}_j)(n \cdot \nabla {u}_j)\notag \\&-(h\cdot n)(\nabla \bar{v}_j\cdot \nabla u_j) \, \mathrm{d}s\}. 
    \end{align}
    By $(\Delta^*+\omega^2)u=0$ and integration by parts, we have \begin{align}\label{eq5.46} 
     \sum_{j=1}^2 & \mu \int_\Omega  (h \cdot \nabla \bar{v}_j)\Delta {u}_j \,\mathrm{d}x  = (\lambda+\mu)\int_\Omega (\nabla \cdot u)\nabla \cdot (h \cdot \nabla \bar{v}) \,\mathrm{d}x \notag \\ &-(\lambda+\mu)\int_{S_R}(\nabla \cdot  u)(n \cdot (h\cdot \nabla \bar{v}))\,\mathrm{d}s-\omega^2 \int_\Omega (h \cdot \nabla \bar{v}) \cdot u \,\mathrm{d}x.
    \end{align}
 Integration by parts again gives \begin{align}
 \label{eq5.47}
    \sum_{j=1}^2 \mu & \int_\Omega (h \cdot \nabla u_j)\Delta \bar{v}_j \,\mathrm{d}x = \notag \\ &-\mu \int_{\Omega} \nabla (h \cdot \nabla u) : \nabla \bar{v} \,\mathrm{d}x + \mu \int_{S_R} (h \cdot \nabla u) \cdot (n \cdot \nabla \bar{v}) \,\mathrm{d}s.
    \end{align}
Combining \eqref{eq5.45}-\eqref{eq5.47} and $(\lambda+\mu)(\nabla \cdot u)n+ \mu \partial_n u=0$ on $S_R$ gives
\begin{align} \label{eq5.48}
    g_1(h)(u,v)&=\mu \int_\Omega \nabla(h \cdot \nabla u):\nabla \bar{v}\,\mathrm{d}x-(\lambda+\mu)\int_\Omega(\nabla \cdot u) \nabla \cdot (h \cdot \nabla \bar{v}) \mathrm{d} x \notag  \\ &+\omega^2 \int_\Omega (h \cdot \nabla \bar{v} )\cdot u \,\mathrm{d}x- \mu \int_{S_R}(h\cdot n)(\nabla u : \nabla \bar{v}) \,\mathrm{d}s.
\end{align}
    Apply the divergence theorem again, we obtain \begin{align} \label{eq5.49}
        \int_\Omega(\nabla \cdot h)(u\cdot \bar{v})+u \cdot (h \cdot \nabla \bar{v}) \,\mathrm{d}x &= \int_\Omega \nabla \cdot((u \cdot \bar{v}) h)-(h \cdot \nabla u) \cdot \bar{v} \, \mathrm{d}x \notag \\ &=\int_{S_R} (u \cdot \bar{v})(h \cdot n)\,\mathrm{d}s-\int_\Omega(h \cdot \nabla u) \cdot \bar{v} \, \mathrm{d}x.
    \end{align}
    Combining \eqref{eq5.48}-\eqref{eq5.49} yields \begin{align} \label{eq5.50}
        g_1&(h)(u,v)  +g_3(h)(u,v)= \notag \\ & \mu \int_\Omega \nabla(h \cdot \nabla u):\nabla \bar{v}\,\mathrm{d}x-(\lambda+\mu)\int_\Omega(\nabla \cdot u) \nabla \cdot (h \cdot \nabla \bar{v}) \mathrm{d} x \notag \\ & -\omega^2 \int_\Omega (h \cdot \nabla u) \cdot \bar{v}\,\mathrm{d}x +\omega^2\int_{S_R} (u \cdot \bar{v})(h \cdot n)\,\mathrm{d}s -\mu \int_{S_R}(h\cdot n)(\nabla u : \nabla \bar{v}) \,\mathrm{d}s.
    \end{align}
   For $g_2(h)(u,v)$, similar discussion as \eqref{eq5.27}-\eqref{eq5.29} shows\begin{align} \label{eq5.51}
         g_2(h)(u,v)&=(\lambda+\mu)\int_\Omega (\nabla \cdot u) \nabla \cdot ( h \cdot \nabla \bar{v})+\nabla \cdot (h \cdot \nabla u)(\nabla \cdot \bar{v})\, \mathrm{d}x\notag \\&- (\lambda+\mu) \int_{S_R} (\nabla \cdot u)(\nabla \cdot \bar{v})( n\cdot h) \,\mathrm{d}s.
    \end{align}
    Notice $u^*$ satisfies \eqref{eq5.41}-\eqref{eq5.43}, then
\begin{align} \label{eq5.52}
B(u^*,v)&=\omega^2 \int_ {S_R}(u\cdot \bar{v})( h\cdot n) \,\mathrm{d}s \notag \\
&-(\lambda+\mu)\int_{S_R}(\nabla \cdot u)(\nabla \cdot \bar{v})(n \cdot h)\,\mathrm{d}s - \mu \int_{S_R} (h\cdot n)(\nabla u :\nabla \bar{v}) \,\mathrm{d}s.  
\end{align}
Combining $w_h|_{\Gamma_R}=0$ and \eqref{eq5.50}-\eqref{eq5.52} gives \begin{align*}
    &\sum^3_{i=1} g_i(h)(u,v)= 
    \mu \int_\Omega \nabla (h :\nabla u) \nabla \bar{v}\, \mathrm{d}x \\ &+(\lambda+\mu) \int_\Omega \nabla \cdot (\nabla u \cdot h)( \nabla \cdot \bar{v})\,\mathrm{d}x -(\lambda+\mu)\int_{S_R}(\nabla \cdot u)(\nabla \cdot \bar{v})(n \cdot h)\,\mathrm{d}s \\ &- \mu \int_{S_R} (h\cdot n)(\nabla u :\nabla \bar{v}) \,\mathrm{d}s + \omega^2 \int_{S_R} (u\cdot \bar{v})( h\cdot n) \,\mathrm{d}s=B(w_h+u^*,v).
\end{align*}
Then we arrive at \[
B(\tilde{u}_h-u-u^*-w_h,v)=O(\|h\|^2_{1,\infty}\|v\|_{H^1(\Omega)^2})+O(\|\tilde{u}_h-u\|_{H^1(\Omega)^2}\|v\|_{H^1(\Omega)^2}\|h\|_{1,\infty}).
\] to completes the proof.
\end{proof}
\emph{Remark}. We can not get local stability from the Fr$\acute{\rm e}$chet derivative like the Dirichlet problem. Let $h(x)=k p(x) n$ with $k>0$ and $p(x) \in C^2(S, \mathbb{R})$. Assume $p(x) \not\equiv 0$. For any $v \in H^1(\Omega)^2$, the equality \[
-(\lambda+\mu)\int_{S_R} p(\nabla \cdot u)(\nabla \cdot \bar{v})\,\mathrm{d}s - \mu \int_{S_R} p(\nabla u :\nabla \bar{v}) \,\mathrm{d}s + \omega^2 \int_ {S_R}p(u\cdot \bar{v}) \,\mathrm{d}s=0
\] does not imply $u|_{S_R}=0$. In fact, we can see in \cite{r23} for electromagnetic scattering, the local stability holds only for lossy medium, i.e, non-real wave number.

\section{Conclusion}
In this paper, elastic cavity problem with Dirichlet or Neumann condition is reduced into a bounded domain by the DtN or NtD operator. Variational approaches is utilized to prove the uniqueness and existence of boundary value problem in a bounded domain. For inverse cavity problem, the Fr$\acute{\rm e}$chet derivative is given for shape reconstruction and a local stability result is given for the Dirichlet problem. The stability results explicit with frequency for large elastic cavities like \cite{r21,r22} remains unsolved, which will be discussed in a forthcoming paper.
%% The Appendices part is started with the command \appendix;
%% appendix sections are then done as normal sections

%% If you have bibdatabase file and want bibtex to generate the
%% bibitems, please use
%%
 \bibliographystyle{elsarticle-num} 
 \bibliography{cas-refs}

%% else use the following coding to input the bibitems directly in the
%% TeX file.

% \begin{thebibliography}{00}

% %% \bibitem{label}
% %% Text of bibliographic item

% \bibitem{}

% \end{thebibliography}
\end{document}